\numberwithin{equation}{section}
\newtheorem{theorem}{Theorem}[section]
\newtheorem{corollary}[theorem]{Corollary}
\newtheorem{lemma}[theorem]{\textsc{Lemma}}
\newtheorem{proposition}[theorem]{Proposition}
\theoremstyle{definition}
\newtheorem{definition}[theorem]{Definition}
\newtheorem{remark}[theorem]{Remark}
\DeclareMathOperator{\Div}{div}
\DeclareMathOperator{\dist}{dist}
\DeclareMathOperator{\graph}{graph}
\DeclareMathOperator{\hyp}{hyp}
\DeclareMathOperator{\epi}{epi}
\DeclareMathOperator{\fl}{\textit{F}_\Lambda}
\DeclareMathOperator{\R}{\mathbb{R}}
\DeclareMathOperator{\Hh}{\mathcal{H}}
\newcommand{\eps}{\varepsilon}
\title[Boundary regularity for the one-phase problem]{Continuity up to the boundary for minimizers of the one-phase Bernoulli problem} 
\author{Xavier Fern\'andez-Real}
\address{Institute of Mathematics, \'Ecole Polytechnique F\'ed\'erale de Lausanne, Lausanne, Switzerland}
\email{xavier.fernandez-real@epfl.ch}
\author{Florian Gruen}
\address{Department of Mathematics, Graduate School of Science, Kyoto University, Kyoto, Japan}
\email{gruen.florian.32r@st.kyoto-u.ac.jp}
\keywords{One-phase problem, boundary regularity, generic regularity, generic uniqueness.}
\subjclass[2010]{35R35, 35B65, 35N25, 35A02}
\thanks{X. F. was supported by the Swiss National Science Foundation (SNF grant  PZ00P2\_208930),   by the Swiss State Secretariat for Education, Research and Innovation (SERI) under contract number MB22.00034, and by the AEI project PID2021-125021NA-I00 (Spain).}
\begin{document}

\begin{abstract}
We prove new boundary regularity results for minimizers to the one-phase Alt-Caffarelli functional (also known as Bernoulli free boundary problem) in the case of continuous and Hölder-continuous boundary data. As an application, we use them to extend recent generic uniqueness and regularity results to families of continuous functions.
\end{abstract}

\maketitle

\section{Introduction}
In this work, we study minimizers of the \textit{Alt--Caffarelli functional}
\begin{equation}
\label{fl}
    \fl (u,D) \coloneq  \int_{D} |\nabla u|^2 dx + \Lambda |\{u>0\} \cap D|, 
\end{equation}
where $D$ is an open domain in $\R^d$, $\Lambda$ a positive real constant and $u\in H^1(D)$.

The problem, also known as one-phase or Bernoulli problem, originates in the fundamental works \cite{caffarellifirst, caffarellioriginalC1alpha} and has various applications in models of flame propagation \cite{applicationflame} and jet flows \cite{applicationsjetflow}.
Recent development include \cite{criticaldimensionccurrent, recent3, velichkovpaper, Engelstein2022GraphicalST, fernandezreal2023generic}. 
We also refer to \cite{caffarellisalsa} and \cite{velichkov} for a detailed mathematical exposition.

Given an open bounded domain $D\subset \R^d$ and a boundary datum $g\in H^1(D)$ with $g\geq 0$ in $D$, the Alt--Caffarelli problem is the  minimization:
\begin{equation}
\label{minimizationproblem}
    \min \{ \fl (u,D): u\in H^1(D) \text{ such that } u-g \in H_0^1(D)\}.
\end{equation}
Any minimizer $u$ is nonnegative, and splits the domain into two parts:
\[
\Omega_u:=\{x \in D :u(x)>0\}\quad\text{ and } \quad \Omega_0:=\{x \in D :u(x)=0\}.
\]
The interface between the two sets, $\partial \Omega_u$, is a priori unknown and is called the \textit{free boundary}.

Any minimizer $u$ to \eqref{minimizationproblem} is locally Lipschitz continuous inside $D$ (see e.g.\ \cite[Chapter 3]{velichkov}). The Euler--Lagrange equation, satisfied by (classical) stationary points of \eqref{fl}, is given by
\[
\left\{
\begin{array}{rcll}
u & \ge & 0 & \quad\text{in}\quad D,\\
\Delta u & = & 0 & \quad\text{in}\quad  \Omega_u,\\
|\nabla u | & = & \sqrt{\Lambda}& \quad\text{on}\quad \partial\Omega_u\cap D.
\end{array}
\right.
\]
For general stationary solutions, the previous equations need to be understood in the viscosity sense. Throughout the paper, for the sake of simplicity, we will fix $\Lambda = 1$.

Reminiscent of the classical Laplace equation with Dirichlet boundary condition\footnote{That is, on a sufficiently regular domain $D$, given a boundary datum $g$, the solution $u$ ``inherits'' (in some sense) the regularity of $g$.}, the main goal of this work is to establish basic regularity estimates up to the boundary for solutions to \eqref{minimizationproblem}. To our knowledge, up until now, the community has been proving such estimates on a need-to-use basis (see \cite{velichkovpaper,fernandezreal2023generic}). We hope that this short note can be useful to researchers in contexts where such estimates can be applied. In this direction, we present some examples of applications of our results, namely a comparison principle and generic-type results for minimizers.

\subsection{Main results}
Our main result says that minimizers of the one-phase problem with continuous boundary datum are continuous up to the boundary. The following result applies, for example, to the case of $C^1$ domains. 

\begin{theorem}
\label{theoremcontinuityuptotheboundary}
     Let $d\geq 2$ and $D \subset \R^d$ be an open domain such that 
    \begin{itemize}
        \item either $D$ is convex,
        \item or $D$ is a locally $c$-Lipschitz domain, for some $c$ small enough  depending only on  $d$. 
    \end{itemize}
    Let $g\in C(\overline D) \cap H^1(D)$ with  modulus of continuity $\omega$ and $\|g\|_{H^1(D)}\le M$ for some $M > 0$, and let $u$ be a minimizer to \eqref{minimizationproblem}. 
    
    Then, $ u \in C(\overline D)$, with a modulus of continuity depending only on $\omega$, $D$, $\Lambda$, and $M$. That is, for any $\eps>0$, there exists $\delta=\delta(\omega, D, \Lambda, M)$ such that
    \begin{equation*}
        |x-y| < \delta \implies |u(x)-u(y)| < \eps \qquad \forall x,y \in \overline D.
    \end{equation*}
\end{theorem}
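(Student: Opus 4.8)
The plan is to establish continuity up to the boundary by constructing local barriers at each boundary point $x_0 \in \partial D$. The key observation is that the modulus of continuity of $u$ at interior points is already controlled by the local Lipschitz bound, so the real content is controlling oscillation near $\partial D$. I would fix $x_0 \in \partial D$ and aim to show that $u(x) \to g(x_0)$ as $x \to x_0$, with a rate depending only on the prescribed data. The natural strategy is to trap $u$ between an upper barrier $\overline{w}$ and a lower barrier $\underline{w}$ near $x_0$, each of which agrees (approximately) with $g(x_0)$ at $x_0$ and has small oscillation on a small ball $B_r(x_0) \cap D$.

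\medskip

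First I would produce the upper barrier. Since $g$ has modulus of continuity $\omega$, on $B_r(x_0) \cap \partial D$ we have $g \le g(x_0) + \omega(r)$. I would use the comparison/subsolution structure of the one-phase problem: a minimizer is subharmonic (in fact $u$ is subharmonic across $D$ since $\Delta u \ge 0$ in the distributional sense), so by the maximum principle $u$ is bounded above by the harmonic function with the appropriate boundary data. Here the geometric hypothesis on $D$ enters crucially: convexity, or small-constant Lipschitz regularity, guarantees the existence of a barrier at $x_0$ for the Laplacian, i.e.\ $x_0$ is a regular boundary point for the Dirichlet problem with a modulus of continuity controlled uniformly by the geometry of $D$. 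This yields $u(x) \le g(x_0) + \omega(r) + \eta(|x-x_0|)$ for a modulus $\eta$ depending only on $D$ and the global bound $\|g\|_{H^1} \le M$ (which controls $\sup u$ via the interior Lipschitz estimate and energy comparison).

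\medskip

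The lower barrier is the more delicate half, and I expect it to be the main obstacle, because $u$ is \emph{not} superharmonic — in $\Omega_0$ it simply vanishes, and the free boundary can approach $\partial D$. The plan is to compare $u$ from below against a competitor. Near $x_0$, one builds a subsolution that is positive and harmonic in $B_r(x_0) \cap D$ with boundary value close to $g(x_0) - \omega(r)$ on $\partial D$ and $0$ on the inner sphere $\partial B_r(x_0)$; the point is to show $u$ cannot drop much below $g(x_0)$ near $x_0$. This requires an energy-comparison argument: if $u$ were too small on a definite portion of $B_r(x_0) \cap D$, one could replace $u$ by the harmonic replacement (or by $\max$ with a suitable harmonic function) and strictly decrease $\fl$, contradicting minimality. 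The geometric hypothesis again guarantees the existence of an interior barrier so that the harmonic replacement inherits the modulus $\omega$. Combining the two barriers gives $|u(x) - g(x_0)| \le \omega(r) + \eta(|x-x_0|)$ on $B_r(x_0)\cap D$.

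\medskip

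Finally I would assemble the uniform modulus. For $x, y \in \overline{D}$ with $|x-y| < \delta$: if both are at distance $\ge c_0\delta$ from $\partial D$, interior Lipschitz continuity (with constant depending on $\Lambda$ and $M$) handles them; if one is within $c_0\delta$ of some $x_0 \in \partial D$, the barrier estimate at $x_0$ controls $|u(x)-g(x_0)|$ and $|u(y)-g(x_0)|$, and the triangle inequality together with $\omega(|x-x_0|)$ and $\omega(|y-x_0|)$ closes the argument. Since every modulus produced ($\omega$, $\eta$, and the interior Lipschitz constant) depends only on $\omega$, $D$, $\Lambda$, and $M$, the resulting $\delta = \delta(\omega, D, \Lambda, M)$ is uniform, giving the claimed modulus of continuity for $u$ on $\overline{D}$.
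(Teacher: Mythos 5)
Your overall architecture (upper bound by subharmonicity and a harmonic majorant; lower bound by showing $u$ stays positive, hence harmonic, near boundary points with positive data; then a triangle-inequality assembly) is the right shape and parallels the paper's case analysis. The upper-barrier half is fine: $\Delta u \ge 0$, so $u$ is dominated by the harmonic function with datum $g$, whose boundary modulus is controlled by $\omega$ and the geometry of $D$. However, your lower-barrier step contains a genuine gap, and it is exactly the step you flagged as the main obstacle. You propose to derive a contradiction with minimality by replacing $u$ with a harmonic replacement (or a $\max$ with a harmonic function) and claiming this \emph{strictly decreases} $\fl$. This is false for the one-phase functional: harmonic replacement decreases the Dirichlet term but \emph{enlarges} the positivity set, so the volume term $\Lambda|\{u>0\}|$ increases, and no contradiction follows. (If this argument were valid, it would prove every minimizer is harmonic, i.e.\ that no free boundary exists.) The correct comparison object is not a harmonic function but the explicit radial minimizer $v_1$ of the one-phase problem on an annulus (Proposition~\ref{solutioninannulus}), which already accounts for its own volume term. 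The paper's Lemma~\ref{lem:basic_used} places a translated copy of this annulus solution below $u$ on the boundary of a suitable set $E$ (this is where convexity or the small Lipschitz constant enters geometrically, not merely through regularity of boundary points for the Dirichlet problem), and then concludes $u \ge v$ from the cut-and-paste identity $\fl(\min(u,v),E)+\fl(\max(u,v),E)=\fl(u,E)+\fl(v,E)$ together with the \emph{uniqueness} of the annulus minimizer. Without this (or an equivalent genuine one-phase comparison), positivity of $u$ near $\{g>0\}\cap\partial D$ is unproved, and the lower bound collapses.

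A secondary issue is quantitative uniformity. The scale $\rho$ at which positivity holds depends on the size of $g(x_0)$ (one must rescale so that the boundary datum dominates the annulus barrier, as in the paper's rescaling $u_k(rx)/r$ with $g_\infty(0)/(2r)\ge 2$), so your modulus $\eta$ cannot depend only on $D$ and $M$ near points where $g(x_0)>0$ is small; there one must instead fall back on the upper bound alone (which suffices since $u\ge 0$). The paper sidesteps this bookkeeping entirely by arguing by compactness and contradiction: it extracts limits $g_k\to g_\infty$, $u_k\to u_\infty$, and splits into the three cases $x_0\in D$, $x_0\in\partial D\cap\{g_\infty=0\}$, $x_0\in\partial D\cap\{g_\infty>0\}$, which is why its modulus of continuity is non-explicit. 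If you repair the lower barrier as above and handle the small-$g(x_0)$ regime separately, your direct approach could in principle yield a more explicit modulus than the paper's, but as written the energy-comparison step does not work.
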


A priori, as for the case of  harmonic functions, the modulus of continuity of $u$ does not need to be the same (nor comparable) to the modulus of continuity of $g$. 

This is in contrast to the case of more regular boundary data, where for H\"older coefficients we actually obtain H\"older regularity up to the boundary (again, as for harmonic functions):
\begin{proposition}
\label{prop:holder}
Let $d\geq 2$ and $D \subset \R^d$ be an open bounded $C^{1,\alpha}$ domain.
    Let $g\in C^\gamma(\bar D) \cap H^1(D)$ where $\frac{1}{2}<\gamma<1$, and let $u$ be a minimizer to \eqref{minimizationproblem}. Then, $ u \in C^\gamma(\overline D)$ and
    \begin{equation*}
        \|u\|_{C^{\gamma}( \overline D)} \leq  C \left(1+ \|g\|_{C^{\gamma}(\partial D )}+ \|u\|_{L^\infty( D)} \right),
    \end{equation*}
    where $C$ depends only on $d$, $\gamma$, $\Lambda$, $\alpha$ and $D$ (in particular, through its diameter and $C^{1,\alpha}$ norm; see Definition~\ref{c1alphadef}).
\end{proposition}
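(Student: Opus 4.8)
The plan is to reduce the global statement to the two-sided pointwise boundary estimate
\begin{equation*}
|u(x)-g(x_0)|\le C\,|x-x_0|^{\gamma}\qquad\text{for every } x_0\in\partial D \text{ and } x\in\overline D,
\end{equation*}
with $C$ of the asserted form, and then to upgrade this to a global $C^\gamma$ bound by combining it with the interior regularity of minimizers. The upper and lower inequalities are proved by genuinely different mechanisms: the upper one by subharmonicity, the lower one by the one-phase structure. I expect the lower inequality to be the main obstacle, since $u$ is \emph{not} superharmonic and one must control how close the free boundary $\partial\Omega_u$ can come to $\partial D$.

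For the upper bound I would use that any minimizer is subharmonic in $D$: $u$ is harmonic on $\Omega_u$, vanishes on $\{u=0\}$, and $\Delta u\ge 0$ is a nonnegative measure concentrated on the free boundary (see \cite{velichkov}). Let $h$ be the harmonic function on $D$ with $h=g$ on $\partial D$. Then $u-h$ is subharmonic with zero boundary trace, so the maximum principle gives $u\le h$ in $D$. Since $D$ is $C^{1,\alpha}$ and $g\in C^\gamma(\partial D)$ with $\gamma<1$, classical boundary regularity for harmonic functions yields $h\in C^\gamma(\overline D)$ with $[h]_{C^\gamma(\overline D)}\le C\,\|g\|_{C^\gamma(\partial D)}$; evaluating at $x_0\in\partial D$, where $h(x_0)=g(x_0)$, gives $u(x)\le g(x_0)+C|x-x_0|^{\gamma}$.

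The lower bound is the hard part, and here the one-phase structure must enter through the linear (Lipschitz) growth and nondegeneracy of $u$ near $\partial\Omega_u$. Writing $a:=g(x_0)$, the estimate is trivial whenever $a-C|x-x_0|^\gamma\le0$, since $u\ge0$; so I only need to treat points with $|x-x_0|\ll a^{1/\gamma}$, where $g$ stays bounded below by a definite positive constant on the boundary patch $\partial D\cap B_r(x_0)$. The key step is to show that in this regime the free boundary must stay a distance $\gtrsim a/\sqrt{\Lambda}$ away from $\partial D$: if a free boundary point $z$ were closer, then projecting to $\partial D$ and using the interior linear bound $u\lesssim\sqrt{\Lambda}\,\dist(\cdot,\partial\Omega_u)$ would force $g$ to be small on a nearby boundary point, contradicting $g\ge a$ there together with $g\in C^\gamma$. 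Consequently $u$ is positive and harmonic in a slab adjacent to $\partial D\cap B_r(x_0)$, and an affine (hence harmonic) lower barrier vanishing at the inner face of the slab yields $u(x)\ge a-C|x-x_0|^\gamma$. I expect the restriction $\gamma>\tfrac12$ to be exactly what makes this scheme work: it reconciles the linear free-boundary growth with the $C^\gamma$ oscillation of $g$ across the slab, and it guarantees the embedding $C^\gamma(\partial D)\hookrightarrow H^{1/2}(\partial D)$, so that the harmonic extension $h$ above has energy controlled purely in terms of $\|g\|_{C^\gamma(\partial D)}$.

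Finally, to pass from the boundary estimate to the global norm bound I would run the standard scaling/covering argument. For $x\in D$ set $d(x):=\dist(x,\partial D)$ and consider the dichotomy: either the ball $B_{d(x)/4}(x)$ lies in $\Omega_u$, in which case the interior gradient estimate for harmonic functions together with the boundary oscillation bound $\operatorname*{osc}_{B_{d(x)/4}(x)}u\le C\,d(x)^\gamma$ gives $|\nabla u(x)|\le C\,d(x)^{\gamma-1}$; or a free boundary point lies within $d(x)/4$ of $x$, in which case the local Lipschitz estimate for minimizers bounds $|\nabla u(x)|\le C\sqrt{\Lambda}\le C\,d(x)^{\gamma-1}$. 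Integrating $|\nabla u(x)|\le C\,d(x)^{\gamma-1}$ along interior paths, and patching with the boundary estimate for pairs of points near $\partial D$, yields $u\in C^\gamma(\overline D)$ with
\begin{equation*}
\|u\|_{C^{\gamma}(\overline D)}\le C\bigl(1+\|g\|_{C^{\gamma}(\partial D)}+\|u\|_{L^\infty(D)}\bigr),
\end{equation*}
the $\|u\|_{L^\infty(D)}$ entering through the minimizers' interior Lipschitz bound and the additive $1$ absorbing the $\sqrt{\Lambda}$-dependent constants.
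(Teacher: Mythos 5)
Your upper bound (subharmonicity plus the harmonic majorant $h$) and your final covering/integration step are fine, but the lower bound contains a genuine gap, and it is exactly at the point you yourself identify as the crux. Your proposed proof of the clearance claim --- ``if a free boundary point $z$ were closer, then projecting to $\partial D$ and using the interior linear bound $u\lesssim\sqrt{\Lambda}\,\dist(\cdot,\partial\Omega_u)$ would force $g$ to be small on a nearby boundary point'' --- is circular. The linear growth estimate is an \emph{interior} estimate: it controls $u$ at points whose distance to $\partial\Omega_u$ is much smaller than their distance to $\partial D$, and it says nothing about the boundary value $g(y_0)$ at the projected point $y_0\in\partial D$. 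To pass from ``$u$ is small at interior points near $y_0$'' to ``$g(y_0)$ is small'' you need precisely a quantitative modulus of continuity of $u$ up to the fixed boundary, which is what Proposition~\ref{prop:holder} is trying to prove. Moreover, no pure comparison-principle substitute is available from below, since $u$ is subharmonic but \emph{not} superharmonic; a lower barrier must be compared with $u$ as a \emph{minimizer}, not as a solution. This is how the paper handles it (Lemma~\ref{lem:basic_used}): one compares $u$ with the explicit annulus minimizer via the cut-and-paste identity $\fl(\min(v,u),E)+\fl(\max(v,u),E)=\fl(v,E)+\fl(u,E)$ and uniqueness of the annulus solution. Note also that the clearance you claim, $\gtrsim a/\sqrt{\Lambda}$, is too strong: a $C^\gamma$ datum with $g(x_0)=a$ may vanish outside a boundary patch of radius $\sim a^{1/\gamma}\ll a$, so the correct (rescaled) clearance scale is $\sim a^{1/\gamma}$, and your slab argument then only closes because the regime $|x-x_0|\lesssim a^{1/\gamma}$ makes the linear drop across the slab of order $a^{1-1/\gamma}|x-x_0|\lesssim|x-x_0|^\gamma$. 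Relatedly, the affine barrier as you describe it (dropping from the value on the outer face to $0$ on the inner face) can only give $u\gtrsim a$ near $\partial D$, not the sharp bound $u(x)\ge g(x_0)-C|x-x_0|^\gamma$; for that you would need the harmonic function in the slab with datum $g$ on the outer face and $0$ elsewhere, combined with the boundary $C^\gamma$ estimate for harmonic functions and the linear correction just mentioned.

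For comparison, the paper's proof is structured entirely differently and avoids pointwise barriers for the H\"older estimate: it flattens $\partial D$ with a $C^{1,\alpha}$ map, extends $\nabla u$ by reflection, compares $u$ with the harmonic replacement $h_g$ on half-balls (this is where minimality of $u$ is used, the volume term contributing $Cr^d\le Cr^{d+2(\gamma-1)}$), proves the energy decay $\int_{B_r(x_0)}|\nabla u|^2\le Cr^{d+2(\gamma-1)}$ by a Campanato-type iteration, and concludes by the Morrey embedding (Lemma~\ref{morrey}). Your scheme could in principle be repaired --- replace your clearance argument by a rescaled version of Lemma~\ref{lem:basic_used} and the affine barrier by the harmonic comparison above --- and it would then be a genuinely different, more ``pointwise'' proof; but as written, the key step rests on an estimate you are not entitled to use.
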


The previous result is a generalization of the case for $\gamma = 1$, originally treated in \cite[Appendix B]{velichkovpaper}. 

\subsection{Applications to generic regularity}
In the second part of the paper, we apply the continuity up to the boundary to show how to extend the results on generic uniqueness of minimizers from \cite{fernandezreal2023generic} to the case of merely continuous data. 

Namely, we show:

\begin{proposition}
\label{theoremgenericuniqueness}
    Let $d \ge 2$, and let $D\subset \R^d$ be a domain as in Theorem~\ref{theoremcontinuityuptotheboundary}. 
    Let  $g_t\in C(\overline{D})\cap H^1(D)$ for $t\in (0, 1)$ with $\sup_{t\in (0, 1)}\|g_t\|_{H^1(D)}< \infty$ be such that,    for all  $0 < s< t< 1$,
    \begin{enumerate}[label=(\roman*)]
    \item $g_t\ge g_s\ge 0$ in $D$, and
    \item  any connected component of $\{g_s > 0\}\cap \partial D$ contains  $x_0$ with $g_t(x_0) > g_s(x_0)$.
    \end{enumerate}
    
 Then, there exists a countable subset $J \subset (0,1)$ such that
    for every $t \in (0,1) \backslash J$, there is a unique minimizer of $F_\Lambda(\cdot, D)$ with boundary datum given by $g_t$.
\end{proposition}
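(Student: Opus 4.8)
The plan is to run a monotonicity-in-$t$ argument for the extremal minimizers and to reduce uniqueness for a.e.\ $t$ to the countability of the discontinuity set of a single nondecreasing real function. Throughout, Theorem~\ref{theoremcontinuityuptotheboundary} is exactly what lets us read off the boundary values $u|_{\partial D}=g_t$ pointwise, and hence to compare minimizers attached to different data. \textbf{Lattice structure.} I would first record the submodularity identity $F_\Lambda(u\wedge v,D)+F_\Lambda(u\vee v,D)=F_\Lambda(u,D)+F_\Lambda(v,D)$, valid for all $u,v\in H^1(D)$: the Dirichlet term is unchanged because $\nabla u$ and $\nabla v$ are merely exchanged across $\{u<v\}$, and the volume term is additive since $\{u\vee v>0\}=\{u>0\}\cup\{v>0\}$ and $\{u\wedge v>0\}=\{u>0\}\cap\{v>0\}$. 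If $u,v$ both minimize with datum $g_t$, then $u\wedge v$ and $u\vee v$ are admissible and the identity forces both to be minimizers; so the minimizers form a sublattice. Using $\sup_t\|g_t\|_{H^1(D)}<\infty$ for weak $H^1$-compactness and lower semicontinuity of $F_\Lambda$, a sequence maximizing (resp.\ minimizing) $\int_D u$ over minimizers converges to a maximal minimizer $\bar u_t$ (resp.\ minimal minimizer $\underline u_t$), and every minimizer $w$ for $g_t$ obeys $\underline u_t\le w\le\bar u_t$.

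\textbf{Monotonicity in $t$.} For $s<t$ and minimizers $u_s$ (datum $g_s$), $u_t$ (datum $g_t$), the ordering $g_s\le g_t$ on $\partial D$ makes $u_s\wedge u_t$ admissible for $g_s$ and $u_s\vee u_t$ admissible for $g_t$, and the submodularity identity shows both are again minimizers for their respective data. Applying this to the extremal minimizers gives $\bar u_s\le\bar u_t$ and $\underline u_s\le\underline u_t$; in particular $\phi(t):=\int_D\bar u_t\,dx$ is nondecreasing.

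\textbf{Strict separation (the crux).} The decisive step is that for $s<t$ one has $\bar u_s\le\underline u_t$. Writing $v_1=\bar u_s$, $v_2=\underline u_t$ and supposing $A=\{v_1>v_2\}\ne\emptyset$, continuity up to the boundary confines $\overline A\cap\partial D\subseteq\{g_s=g_t\}$, while $v_1=v_2$ on the interior part of $\partial A$; on $A$ the function $v_1$ is harmonic and $v_2$ is subharmonic. This alone does not close, since two distinct ordered minimizers for a single datum can coexist, so assumption~(ii) must enter here: the strict gap $g_t>g_s$ on each connected component of $\{g_s>0\}\cap\partial D$, together with the strong maximum principle/Hopf lemma and the non-degeneracy of the free boundary of minimizers, should propagate strictness and rule out $A\ne\emptyset$. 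I expect this strong comparison (morally the comparison principle advertised in the introduction) to be the main obstacle; the remaining steps are soft.

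\textbf{Conclusion.} Let $J$ be the at most countable set of discontinuities of the nondecreasing function $\phi$. By monotone convergence the pointwise increasing limit $w:=\lim_{s\uparrow t}\bar u_s$ satisfies $\int_D w\,dx=\phi(t^-)$, and the separation step gives $w\le\underline u_t$, so that $\phi(t^-)=\int_D w\le\int_D\underline u_t\le\int_D\bar u_t=\phi(t)$. For $t\notin J$ the outer quantities agree, hence $\int_D\underline u_t=\int_D\bar u_t$ with $\underline u_t\le\bar u_t$ pointwise, forcing $\underline u_t=\bar u_t$ a.e.; since every minimizer lies between these two, the minimizer for $g_t$ is unique.
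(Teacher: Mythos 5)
You have correctly located the crux of the proof, but you have not supplied it, and that is a genuine gap: the strict separation $\bar u_s \le \underline u_t$ for $s<t$ is precisely the paper's comparison principle (Lemma~\ref{orderedwrtdatum}), and it is the only place where hypothesis (ii) and the boundary continuity of Theorem~\ref{theoremcontinuityuptotheboundary} do real work. Your sketch (strong maximum principle, Hopf lemma, non-degeneracy ``should propagate strictness'') does not close it. On $A=\{v_1>v_2\}$ the difference $v_1-v_2$ is superharmonic, positive inside and vanishing on $\partial A$, which is perfectly self-consistent; a Hopf-type argument at a touching point breaks down because the contact can occur along the free boundaries, where neither function is smooth and where both may vanish. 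Note also that (ii) gives strictness $g_t>g_s$ only at \emph{one point} of each connected component of $\{g_s>0\}\cap\partial D$, not a gap on the whole component, so it is not even clear where on $\overline A$ your strictness would be injected. The mechanism the paper uses is topological rather than differential: by cut-and-paste, $\tilde u:=\max(u_{g_t},u_{g_s})$ is again a minimizer, and $h:=\tilde u-u_{g_s}\ge 0$ is harmonic on $\{u_{g_s}>0\}\cap D$; if $u_{g_s}>u_{g_t}$ at some point, $h$ vanishes there, so by the strong maximum principle $\tilde u\equiv u_{g_s}$ on the closure of the connected component $\mathcal{C}$ of $\{u_{g_s}>0\}\cap D$ containing it. The key step is that $\partial\mathcal{C}\cap\{u_{g_s}>0\}$ is nonempty (else $u_{g_s}\equiv 0$ on $\mathcal{C}$) and must lie on $\partial D$ (a point of $\partial\mathcal{C}$ interior to $D$ with $u_{g_s}>0$ would belong to the open set $\{u_{g_s}>0\}\cap D$, hence to $\mathcal{C}$ itself, which is impossible); by continuity up to the boundary, $\overline{\mathcal{C}}$ then contains an entire connected component of $\{g_s>0\}\cap\partial D$, and only now does (ii) produce $z\in\overline{\mathcal{C}}$ with $\tilde u(z)\ge u_{g_t}(z)=g_t(z)>g_s(z)=u_{g_s}(z)$, contradicting $\tilde u\equiv u_{g_s}$ on $\overline{\mathcal{C}}$. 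Without this lemma, your proof is incomplete at its self-declared main obstacle.

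For what it is worth, the remainder of your argument is sound and ends differently from the paper: granted the comparison lemma, the paper attaches to each non-uniqueness time an open ball of $\R^{d+1}$ wedged between the epigraphs of two distinct minimizers, these balls being pairwise disjoint by the ordering, and concludes because $\R^{d+1}$ contains at most countably many disjoint open balls; you instead construct extremal minimizers $\underline u_t\le\bar u_t$ (your existence argument should replace a maximizing sequence by its running maxima to get monotone convergence, a minor fix), observe that $\phi(t)=\int_D\bar u_t$ is nondecreasing, and take $J$ to be its countable set of discontinuities. Both endgames are soft and correct; the mathematical content of the proposition lives entirely in the lemma you left unproved.
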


Lastly, we also show a generic regularity result for the free boundary. By  \cite{Weiss1999PartialRF}, it is already known that up to a certain critical dimension $d^*$ ($4\leq d^* \leq 6$, see \cite{criticaldimensionccurrent}) the free boundary of $u$ is always smooth, i.e.\ its set of singular points, denoted ${\rm Sing}(u)$, is empty (and in general dimension, it has Hausdorff dimension $d-d^*-1$). Thanks to \cite{fernandezreal2023generic}, generically this dimension can be increased by one if one takes minimizers with Lipschitz boundary data. We generalize the result to a wider class of boundary data:

\begin{theorem}
\label{theoremgenericregularity notequicontinuous}
    Let $d \ge 2$, and let $D\subset \R^d$ be a domain as in Theorem~\ref{theoremcontinuityuptotheboundary}. 
    Let  $g_t\in C(\overline{D})\cap H^1(D)$ with $g_t \ge 0$ for $t\in (0, 1)$,  $\sup_{t\in (0, 1)}\|g_t\|_{H^1(D)} <\infty$, and  
    \[
    g_t - g_s \ge t-s\quad \text{in}\quad  D \quad\text{for all}\quad  0< s< t< 1.
    \]
    
    Let $u_t$ denote any minimizer of $F_\Lambda(\cdot, D)$ with boundary datum $g_t$. Then:
    \begin{itemize}
        \item If $d=d^*+1$,  there exists a countable subset $J \subset (0,1)$ such that 
        \begin{equation*}
            {\rm Sing}(u_t) = \varnothing \qquad \qquad \text{for every } t \in (0,1) \backslash J.
        \end{equation*}
        \item If $d\geq d^*+2$,
        \begin{equation*}
            \textnormal{dim}_{\Hh} \, {\rm Sing}(u_t) \leq d- d^*-2 \quad\quad \text{for almost every } t \in (0,1),
        \end{equation*}
    \end{itemize}
    where ${\rm dim}_{\mathcal{H}}$ denotes the Hausdorff dimension of a set. 
\end{theorem}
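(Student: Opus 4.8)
The plan is to reduce the statement to the generic-regularity machinery of \cite{fernandezreal2023generic}, whose essential inputs are a comparison principle, the monotonicity of minimizers in the parameter, and a strict advance of the free boundaries; the new contribution is that all of these now hold for \emph{merely continuous} data, thanks to Theorem~\ref{theoremcontinuityuptotheboundary} and the comparison principle derived from it (the same one invoked in Proposition~\ref{theoremgenericuniqueness}). Concretely, I would proceed in three stages.

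\emph{Step 1 (monotone foliation and disjointness).} Using the comparison principle for \eqref{minimizationproblem}, two minimizers with ordered data are ordered: since $g_s\le g_t$ for $s<t$, one gets $u_s\le u_t$ and hence nested positivity sets $\Omega_{u_s}\subseteq\Omega_{u_t}$. The strict growth $g_t-g_s\ge t-s$ should upgrade this to a \emph{strict} advance: combining comparison with the non-degeneracy and interior Harnack estimates for minimizers (see \cite{velichkov}), one shows that every free boundary point of $u_s$ lies in the interior of $\Omega_{u_t}$ for $t>s$, so that $u_t>0$ there. Introducing the threshold $T(x):=\sup\{t\in(0,1): u_t(x)=0\}$, this says that the free boundaries $\partial\Omega_{u_t}\cap D$ are, away from the value $t=T(x)$, the level sets $\{T=t\}$; in particular the singular sets $\mathrm{Sing}(u_t)\subset\{T=t\}$ are \emph{pairwise disjoint} as subsets of $D$.

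\emph{Step 2 (quantified advance).} I would then make the advance quantitative by showing that $T$ is locally Lipschitz on the swept region, i.e. that the free boundaries move with $t$-speed bounded above and below. This is where the strict inequality $g_t-g_s\ge t-s$ is used decisively: it produces a barrier forcing $u_t\ge u_s+c\,(t-s)$ in a neighborhood of $\partial\Omega_{u_s}$, which together with the Lipschitz bound and non-degeneracy of $u_t$ yields a two-sided control on the normal displacement of $\{T=t\}$. I expect this to be the main obstacle: for merely continuous boundary data the relevant barriers can no longer borrow Lipschitz control from $g$, so they must be built from Theorem~\ref{theoremcontinuityuptotheboundary} and interior estimates alone. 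Away from $\partial D$ the minimizers are Lipschitz, so the difficulty is confined to a neighborhood of the fixed boundary, where the uniform modulus of continuity supplied by Theorem~\ref{theoremcontinuityuptotheboundary} (uniform in $t$ via $\sup_t\|g_t\|_{H^1(D)}<\infty$) replaces the previously available Lipschitz estimate.

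\emph{Step 3 (dimension reduction by coarea).} Collect the singular sets into $\Sigma:=\{(x,t)\in D\times(0,1): x\in\mathrm{Sing}(u_t)\}$. By \cite{Weiss1999PartialRF} together with the uniform energy bound, each slice satisfies $\dim_{\mathcal{H}}\mathrm{Sing}(u_t)\le d-d^*-1$ with locally uniform estimates; combined with the Lipschitz graph structure of $\Sigma$ over $\bigcup_t\mathrm{Sing}(u_t)$ from Step~2 (disjointness makes the projection $(x,t)\mapsto x$ injective, and the Lipschitz $T$ makes it bi-Lipschitz onto its image), this gives $\mathcal{H}^{d-d^*-1}(\Sigma)$ locally finite. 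Since the projection $\rho(x,t)=t$ is $1$-Lipschitz, the coarea (Eilenberg) inequality applied with $m=d-d^*-1\ge 1$ yields
\[
\int_0^1 \mathcal{H}^{\,d-d^*-2}\big(\mathrm{Sing}(u_t)\big)\,dt \;\le\; C\,\mathcal{H}^{\,d-d^*-1}(\Sigma)\;<\;\infty,
\]
so $\dim_{\mathcal{H}}\mathrm{Sing}(u_t)\le d-d^*-2$ for almost every $t$, which is the case $d\ge d^*+2$. In the borderline case $d=d^*+1$ the slices are $0$-dimensional (locally finite point sets) and pairwise disjoint; here I would instead charge each nonempty $\mathrm{Sing}(u_t)$ to a positive jump of the monotone, finite-variation volume $t\mapsto|\Omega_{u_t}|$ created by the strict advance of Step~2, so that only countably many $t$ can carry a singular point. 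This identifies the exceptional set $J$ and completes the proof.
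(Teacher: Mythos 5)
Your overall skeleton (comparison ordering, a quantified advance of the free boundaries, then a dimension-reduction argument in space-time) is indeed the machinery of \cite{fernandezreal2023generic} that the paper also relies on, but there is a genuine gap at exactly the point you flag as ``the main obstacle,'' and your proposed fix does not work. In Step 2 you claim the modulus of continuity supplied by Theorem~\ref{theoremcontinuityuptotheboundary} is ``uniform in $t$ via $\sup_t\|g_t\|_{H^1(D)}<\infty$.'' It is not: the modulus produced by Theorem~\ref{theoremcontinuityuptotheboundary} depends on the modulus of continuity $\omega$ of the boundary datum itself (in addition to $D$, $\Lambda$, $M$), and the family $\{g_t\}$ in Theorem~\ref{theoremgenericregularity notequicontinuous} is \emph{not} assumed equicontinuous --- only monotone, nonnegative, and bounded in $H^1$. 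Hence there is no uniform-in-$t$ modulus for the $u_t$'s; the width of the boundary strip in which $u_{t_0}>0$, and therefore the constant $\kappa$ in the advance $\sup_{B_{\kappa(t-t_0)}(x)}u_{t_0}\le u_t(x)$, may degenerate as $t_0$ varies. Your threshold function $T$ then fails to be locally Lipschitz with uniform constants, and the bi-Lipschitz graph/coarea argument of Step 3 collapses. Working around this lack of equicontinuity is precisely the new difficulty of the theorem (see the remark following its statement), and it requires a separate idea.

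The paper's resolution is a purely real-analytic partitioning lemma (Lemma~\ref{countablesubfamilies}): a monotone family of continuous functions on $\overline D$ is locally equicontinuous outside a countable set of parameters, because the ``jump times'' $t$ with $\inf_{s>t}\|g_s-g_t\|_{L^\infty(D)}>0$ (or the symmetric condition from the left) form a countable set by separability of $C(\overline D)$, and the remaining open set of parameters is a countable union of intervals. On each such interval the equicontinuous-case statement (Proposition~\ref{theoremgenericregularity} --- which is where the paper proves your Step 2, using Theorem~\ref{theoremcontinuityuptotheboundary} and Lemma~\ref{orderedwrtdatum}, and then cites \cite{fernandezreal2023generic} for the remaining machinery) applies via a compact exhaustion, and the countable exceptional sets are absorbed into $J$ (or into a null set). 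Without this step, or some substitute, your proof cannot close. Two secondary issues: in Step 3, local finiteness of $\mathcal{H}^{d-d^*-1}(\Sigma)$ does not follow from the slice bound $\dim_{\mathcal{H}}\mathrm{Sing}(u_t)\le d-d^*-1$ (a Hausdorff dimension bound carries no measure bound, and an uncountable union of low-dimensional slices can be large); and in the case $d=d^*+1$ a singular free boundary point does not obviously produce a jump of $t\mapsto|\Omega_{u_t}|$, so the ``charging'' argument is unjustified. Both of these steps are carried out differently, and quantitatively, in \cite{fernandezreal2023generic}, which is why the paper cites that machinery rather than re-deriving it.
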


\begin{remark}
    As an example, the family $\{g+\lambda\}_{\lambda \in (0,1)}$ with $g:\partial D \to \R$ nonnegative and continuous, satisfies the assumptions of Proposition~\ref{theoremgenericuniqueness} and Theorem\ref{theoremgenericregularity notequicontinuous}. 
\end{remark}

\begin{remark}
 Contrary to \cite{fernandezreal2023generic}, where the family ${g_t}$ is required to be equi-Lipschitz continuous, any assumption on equicontinuity becomes redundant and only uniform boundedness of the family ${g_t}$ and monotonicity are required.
\end{remark}

We finally also refer to Lemma~\ref{orderedwrtdatum} for a comparison principle between minimizers with continuous boundary data.

\subsection{Structure of the paper}
We start by proving, in Subsection~\ref{sectioncontinuous} and by means of a barrier and compactness argument, the quantitative continuity up to the boundary, Theorem~\ref{theoremcontinuityuptotheboundary}.
In Subsection~\ref{sectionhölder}, we then show Proposition~\ref{prop:holder}:  for Hölder continuous boundary datum, the minimizer is also Hölder continuous (with the same exponent) up to the boundary. For that, we use a modified version of the arguments in \cite[Lemma~B.1]{velichkovpaper}. 

Finally, in Section~\ref{sectiongeneric}, we apply our results by first proving a general comparison lemma for continuous minimizers, and then  to show generic uniqueness (Proposition~\ref{theoremgenericuniqueness} in Subsection~\ref{sectionuniqueness}) and generic regularity (Theorem~\ref{theoremgenericregularity notequicontinuous} in Subsection~\ref{sectionregularity}). There we show how to modify the arguments from \cite{fernandezreal2023generic}, and how to work around the equicontinuity of the boundary data.

\section{Boundary regularity}
\label{section1}
This section introduces the two new boundary regularity results.
Note that the regularity of the ambient domain is crucial as well, however here we are not concerned with necessary conditions (\textit{Wiener-type criterioa}) and assume sufficient regularity of $\partial D$ as needed. 
We recall that the hypograph of a function $f:\R^d \to \R$ is  given as 
\begin{equation*}
    \hyp(f) := \{(x,y) \in \R^{d+1}: f(x) \ge y \}.
\end{equation*}
We also state some standard definitions here for the reader's convenience.

\begin{definition}
    A domain $D \subset \R^d$ is $c$-Lipschitz for some $c > 0$, if for each $x_0\in \partial D$, up to a rotation, $\partial D$ is the graph of a $c$-Lipschitz function $\varphi$ in $B_1(x_0)$.
\end{definition}
\begin{definition}
\label{c1alphadef}
    A domain $D\subset \R^d$ is a $C^{1,\alpha}$ domain for some $\alpha\in (0, 1]$, if for each $x_0\in \partial D$, up to a rotation, $\partial D$ is the graph of a $C^{1,\alpha}$ function $\varphi$ in $B_1(x_0)$. The maximum $C^{1, \alpha}$ norm of such function among all $x_0\in \partial D$ is what we call the $C^{1,\alpha}$ norm of the domain $D$. 
\end{definition}
\begin{remark}
    Up to a rescaling, any bounded domain that is locally Lipschitz/$C^{1,\alpha}$ is a Lipschitz/$C^{1,\alpha}$ domain respectively.
\end{remark}

\subsection{Continuous boundary datum}
\label{sectioncontinuous}

The first result we prove concerns the regularity of minimizers with merely continuous datum. We recall the well-known solution on an annulus. Remember that we are fixing $\Lambda = 1$.
\begin{proposition} \textnormal{\cite[Proposition 2.15]{velichkov}}
\label{solutioninannulus}
    Let $d\ge 2$. There exists $R=R(d)\in (1, 2)$ such that  
    \begin{equation*}
        v_1(x) := \begin{cases}       
        1-\frac{\log |x|}{\log R} &\text{ if $d=2$},\\[7pt] 
        \frac{|x|^{2-d} - R^{2-d}}{1 - R^{2-d}} &\text{ if $d\geq3$},\\
        \end{cases}
    \end{equation*}
    is the unique solution of $\eqref{minimizationproblem}$ on $A= B_R\backslash B_1$ with $g|_{\partial B_1}=1$ and $g|_{\partial B_R}=0$.
\end{proposition}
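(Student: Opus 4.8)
The plan is to establish existence of a minimizer by the direct method, reduce the problem to a one–dimensional radial variational problem by a rearrangement argument, and then solve the radial problem explicitly, the special value of $R$ being forced by the free boundary condition $|\nabla u| = 1$. Existence of a minimizer $u$ follows from the direct method: the admissible class $\{u \in H^1(A): u - g \in H_0^1(A),\ u \ge 0\}$ is nonempty and weakly closed, the Dirichlet energy is weakly lower semicontinuous, and the measure term $|\Omega_u \cap A|$ is lower semicontinuous along a minimizing sequence converging in $L^1_{\mathrm{loc}}$, as is standard for $\fl$. By comparison with the constants $0$ and $1$ (truncation lowers the energy) one obtains $0 \le u \le 1$ in $A$, so that $u$ attains its maximum on $\partial B_1$ and vanishes on $\partial B_R$.

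First I would reduce to radial competitors. Since $0\le u \le 1$ with $u = 1$ on $\partial B_1$, every superlevel set $\{u > s\}$, $s\in(0,1)$, contains $\overline{B_1}$ up to a null set. I would replace $u$ by an annular symmetric rearrangement $u^\star$, defined by requiring $\{u^\star > s\}$ to be the concentric annulus $B_{\rho(s)}\setminus\overline{B_1}$ with $|B_{\rho(s)}\setminus\overline{B_1}| = |\{u>s\}\setminus\overline{B_1}|$. This rearrangement preserves both boundary data, preserves the measure $|\Omega_u\cap A|$ entering the second term of $\fl$, and decreases the Dirichlet energy by a P\'olya--Szeg\H{o}-type inequality, with equality only if $u$ is already radial and radially nonincreasing. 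Hence it suffices to minimize $\fl(\cdot,A)$ among radial profiles $u(x) = h(|x|)$.

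Writing $u = h(|x|)$ and setting $\omega_{d-1} = |\partial B_1|$, the functional becomes
\[
\fl(u,A) = \omega_{d-1}\int_1^R \big( h'(r)^2 + \Lambda\,\mathbf{1}_{\{h>0\}} \big)\, r^{d-1}\, dr,
\]
with $h(1)=1$, $h\ge 0$, $h(R)=0$. On its positivity set the minimizer solves $(r^{d-1}h')'=0$, which integrates to the logarithmic profile for $d=2$ and to the $r^{2-d}$ profile for $d\ge 3$. If the profile detaches at a radius $\rho\le R$, a Hadamard-type computation gives $\frac{d}{d\rho}\big[\text{Dirichlet energy}\big] = -\omega_{d-1}\, h_\rho'(\rho)^2\, \rho^{d-1}$, so stationarity of $E(\rho):=\fl$ in $\rho$ reads $E'(\rho) = \omega_{d-1}\rho^{d-1}\big(\Lambda - h_\rho'(\rho)^2\big) = 0$, i.e. exactly the free boundary condition $|h_\rho'(\rho)| = \sqrt{\Lambda}=1$. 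Explicitly this is
\[
\frac{(d-2)\,\rho^{1-d}}{1-\rho^{2-d}} = \frac{d-2}{\rho(\rho^{d-2}-1)} = 1 \quad (d\ge 3), \qquad \frac{1}{\rho\log\rho} = 1 \quad (d=2).
\]
In each case the left-hand side is strictly decreasing from $+\infty$ (as $\rho\to 1^+$) to $0$, so it has a unique root; since it equals $\tfrac12$ (resp. $\tfrac{1}{2\log 2}<1$) at $\rho=2$, that root lies in $(1,2)$, and this is the claimed $R=R(d)$. Because $h_\rho'(\rho)^2 > \Lambda$ for small $\rho$ and $< \Lambda$ for large $\rho$, the function $E$ is decreasing and then increasing, so its unique critical point is the global minimizer; as it coincides with the outer radius $R$, the optimal radial profile is positive on all of $A$ and equals $v_1$.

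Uniqueness then follows by combining the strict convexity of the one-dimensional Dirichlet integral on a fixed positivity interval (which pins down $h$ once the detachment radius is fixed) with the uniqueness of the optimal radius and the strictness of the rearrangement inequality, which forces any minimizer to be radial and radially nonincreasing. The main obstacle is this rearrangement step: one must set up an annular symmetrization compatible with the two distinct boundary values ($1$ on $\partial B_1$, $0$ on $\partial B_R$) that simultaneously preserves the measure of the positivity set and \emph{strictly} decreases the Dirichlet energy, together with the rigidity statement characterizing the equality case. An alternative route, avoiding rearrangement altogether, is to prove radial symmetry of minimizers directly from the rotational invariance of $\fl(\cdot, A)$ together with a uniqueness argument for the radial reduction, and then conclude by the explicit computation above.
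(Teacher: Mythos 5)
First, a remark on the benchmark: the paper does not prove this proposition at all --- it quotes it from \cite{velichkov} --- so the comparison is between your argument and the proof in that reference. The explicit one-dimensional analysis at the core of your proposal is correct: for the truncated radial profiles $h_\rho$ (harmonic on $B_\rho\setminus B_1$, zero beyond $\rho$) one has $E'(\rho)=\omega_{d-1}\rho^{d-1}\left(\Lambda-h_\rho'(\rho)^2\right)$, the equation $|h_\rho'(\rho)|=\sqrt{\Lambda}=1$ has a unique root $R\in(1,2)$ because $\rho\mapsto\frac{d-2}{\rho(\rho^{d-2}-1)}$ (resp.\ $\frac{1}{\rho\log\rho}$ for $d=2$) decreases from $+\infty$ to $0$, and $E$ is then strictly decreasing on $(1,R]$, so $v_1$ is the unique minimizer among radial profiles. (Small slip: the value at $\rho=2$ is $\frac{d-2}{2(2^{d-2}-1)}$, which equals $\frac12$ only for $d=3$; for $d\ge 4$ it is smaller --- still $<1$, which is all you need.)

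The genuine gap is the reduction to radial competitors, which you yourself flag as ``the main obstacle'' and never close; as stated it is not a citable fact. The P\'olya--Szeg\H{o} scheme (coarea formula plus Cauchy--Schwarz) requires, level by level, the relative isoperimetric inequality $P(\{u^\star>s\};A)\le P(\{u>s\};A)$, and for your annular rearrangement this inequality is \emph{false} for general sets of prescribed volume: a small ball in the interior of $A$ has arbitrarily small relative perimeter, while every concentric annulus $B_\rho\setminus B_1$ has relative perimeter $\omega_{d-1}\rho^{d-1}\ge\Hh^{d-1}(\partial B_1)$. It becomes true only because admissible level sets separate $\partial B_1$ from $\partial B_R$, and proving it needs a specific idea you do not supply: apply the classical isoperimetric inequality to the filled set $\{u>s\}\cup B_1$, whose perimeter equals $P(\{u>s\};A)$ since $u$ has trace $1$ on $\partial B_1$, and note that concentric annuli achieve equality. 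Worse, your uniqueness argument needs the rigidity (equality) case of this rearrangement inequality, which is delicate even classically (Brothers--Ziemer-type rigidity can fail when the distribution function has flat spots), and here equality in the isoperimetric step only says each filled level set is \emph{some} ball containing $B_1$, not a concentric one. Your fallback --- deducing radial symmetry ``directly from the rotational invariance of $\fl$'' --- is circular: $\fl$ is not convex, so invariance of the functional does not pass to minimizers; if a non-radial minimizer existed, all its rotations would be minimizers too, and excluding this is exactly the uniqueness being proved. All of this can be bypassed (and this is essentially how the cited reference argues): since $|\nabla u|^2\ge|\partial_r u|^2$, Fubini in polar coordinates gives $\fl(u,A)\ge\int_{\mathbb{S}^{d-1}}\!\int_1^R\left(|\partial_r u(r\theta)|^2+\Lambda\mathbf{1}_{\{u(r\theta)>0\}}\right)r^{d-1}\,dr\,d\theta$, so the problem decouples into one-dimensional problems along rays, each of which is exactly your explicit computation; equality forces the angular derivatives to vanish and each ray profile to equal the unique one-dimensional minimizer, yielding minimality \emph{and} uniqueness of $v_1$ simultaneously, with no rearrangement or isoperimetric input.
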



Using the previous explicit solution as a barrier, we are able to prove quantitatively that the minimizer $u$ is continuous up to the boundary (Theorem~\ref{theoremcontinuityuptotheboundary}). The modulus of continuity of the solution $u$ is not necessarily the same as for the boundary datum, but depends on it (as well as the domain and the modulus of the boundary datum itself). We start with a lemma, stating that minimizers are positive close to positive boundary data:

\begin{lemma}
\label{lem:basic_used}
    Let $d\ge 2$, and let $D$ be an open domain such that 
    \begin{itemize}
        \item either $D$ is convex,
        \item or $D$ is  $c$-Lipschitz, for some $c$ small enough depending only on $d$. 
    \end{itemize}
    Let us assume, moreover, that $0\in \partial D$, and that $u \ge 2 > 0$ on $\partial D\cap B_2$. Then,
    \[
    u > 0 \quad\text{in}\quad D\cap B_\rho,
    \]
    for some $\rho > 0$ depending only on   $d$. 
\end{lemma}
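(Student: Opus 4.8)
The plan is to use the explicit annulus solution $v_1$ from Proposition~\ref{solutioninannulus} as a subsolution/barrier from below, exploiting the comparison structure of the one-phase problem. The key observation is that since $u\ge 2$ on $\partial D\cap B_2$ and $u\ge 0$ everywhere, we want to show $u$ cannot vanish too close to the boundary point $0$. Because $v_1$ equals $1$ on $\partial B_1$ and $0$ on $\partial B_R$ with $R\in(1,2)$, a suitably placed and scaled copy of $v_1$ will sit below the boundary data of $u$ on the relevant annular region, and the minimality of $u$ (via comparison) will force $u$ to dominate this barrier, hence to be strictly positive in a definite inner region.

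\textbf{Construction of the barrier.} The first step is geometric: I would use the convexity or small-Lipschitz-constant hypothesis to find, at the boundary point $0\in\partial D$, a small ball $B_r(z)\subset D^c$ (an exterior touching ball, or for the Lipschitz case, an exterior cone/ball after flattening) whose boundary passes near $0$. Around this exterior ball I build an annulus $A=B_{Rr}(z)\setminus B_r(z)$ and place a scaled-translated copy $w(x) := 2\, v_1\!\bigl((x-z)/r\bigr)$ of the annulus solution, which equals $2$ on $\partial B_r(z)$ and $0$ on $\partial B_{Rr}(z)$. The radius $r$ and the smallness of $c$ must be chosen (depending only on $d$) so that the portion of $\partial B_r(z)$ lying inside $D$, together with the inner part of the annulus, is captured within $B_2$ where $u\ge 2$ is known.

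\textbf{Comparison.} The second step is to compare $u$ with $\min(w, u)$ on the relevant region $A\cap D$. On the part of the boundary of this region coming from $\partial D$ we have $u\ge 2\ge w$ (using $v_1\le 1$ and the factor $2$, plus $w=0$ where it exits $B_{Rr}(z)$); on the outer sphere $\partial B_{Rr}(z)$ we have $w=0\le u$. Since $v_1$ is the minimizer on the annulus and in particular a (sub)solution, the standard comparison / competitor argument for the Alt--Caffarelli functional --- replacing $u$ by $\max(u,w)$ inside $D$ if this lowered the energy --- shows $u\ge w$ throughout $A\cap D$. Because $w>0$ strictly on the open annulus $B_{Rr}(z)\setminus \overline{B_r(z)}$, and this annulus contains a fixed inner ball $B_\rho\cap D$ around $0$ with $\rho=\rho(d)>0$ determined by $r$ and $R$, we conclude $u>0$ on $D\cap B_\rho$.

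\textbf{Main obstacle.} The delicate point is making the comparison rigorous for minimizers (rather than classical solutions), since $u$ is only known to be a minimizer and $w$ need not be a global competitor. I expect the cleanest route is to invoke that $v_1$ is itself a minimizer on the annulus (Proposition~\ref{solutioninannulus}) and use a gluing/truncation argument: set $\tilde u := \max(u, w)$ on $A\cap D$ and $\tilde u := u$ elsewhere, check $\tilde u$ is admissible, and compare energies, where the positivity-measure term must be controlled. The geometric step of guaranteeing the exterior touching region lies in $B_2$ and that the protected boundary portion has $u\ge 2$ --- this is exactly where the smallness of the Lipschitz constant $c$ (or convexity) enters, ensuring the annulus fits correctly --- will require care but is routine once the right $r=r(d)$ is fixed.
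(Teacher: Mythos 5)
Your overall strategy (exterior ball at $0$, the annulus solution $v_1$ as a lower barrier, comparison via minimality) is the same as the paper's, but the comparison step --- which you yourself flag as the delicate point --- contains two genuine gaps. First, your barrier $w(x)=2\,v_1\bigl((x-z)/r\bigr)$ is \emph{not} a minimizer of the functional $\fl$ with $\Lambda=1$: the Alt--Caffarelli energy is invariant only under the rescaling $u\mapsto u(r\,\cdot)/r$, and both the dilation by $r$ and the factor $2$ change the effective constant (a direct computation shows $w$ minimizes the functional with $\Lambda'=4/r^2\neq 1$). So ``invoke that $v_1$ is itself a minimizer'' does not transfer to $w$. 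The paper sidesteps this entirely: the hypothesis is already normalized at unit scale ($u\ge 2$ on $\partial D\cap B_2$, annulus of radii $1$ and $R<2$), so the unscaled, unmultiplied $v_1$ is used directly, the boundary inequality coming from $v_1\le 1<2\le u$. If you want to rescale, you must use the $\Lambda$-preserving rescaling $w(x)=r\,v_1\bigl((x-z)/r\bigr)$, for which $w\le r<2\le u$ on the relevant boundary piece still holds.

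Second, and more seriously, your proposed mechanism --- set $\tilde u := \max(u,w)$ and ``compare energies'' --- cannot yield $u\ge w$. Minimality of $u$ gives $\fl(u,E)\le\fl(\max(u,w),E)$ for free, since $\max(u,w)$ is an admissible competitor for $u$; this carries no information. (A pure maximum-principle comparison is also unavailable: $u$ is subharmonic, not superharmonic, so $u\ge w$ on $\partial E$ does not propagate inside.) The argument that actually works, and that the paper runs, needs three ingredients you omit: (i) $\min(u,w)$ is an admissible competitor for $w$, so $\fl(w,E)\le \fl(\min(u,w),E)$; (ii) the cut-and-paste identity $\fl(\min(u,w),E)+\fl(\max(u,w),E)=\fl(u,E)+\fl(w,E)$, which forces $\fl(w,E)=\fl(\min(u,w),E)$; and (iii) the \emph{uniqueness} of $w$ as minimizer of its own boundary datum (inherited from Proposition~\ref{solutioninannulus}), which upgrades this energy equality to $\min(u,w)=w$, i.e.\ $u\ge w$ pointwise. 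Without the min-competitor, the cut-and-paste identity, and the uniqueness step, you only learn that $\max(u,w)$ is \emph{another} minimizer, never that $u\ge w$. Your geometric setup matches the paper's and is fine; it is this variational comparison, with a $\Lambda$-correct barrier, that must be supplied.
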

\begin{proof}
We proceed with a barrier argument (see Figure~\ref{fig:setup} for a sketch of the setting in the two types of domain). 
\begin{figure}
    \centering
    \begin{minipage}[t]{0.49\textwidth}
        \centering
        \includegraphics[width=0.8\linewidth, trim=0 140 0 0, clip]{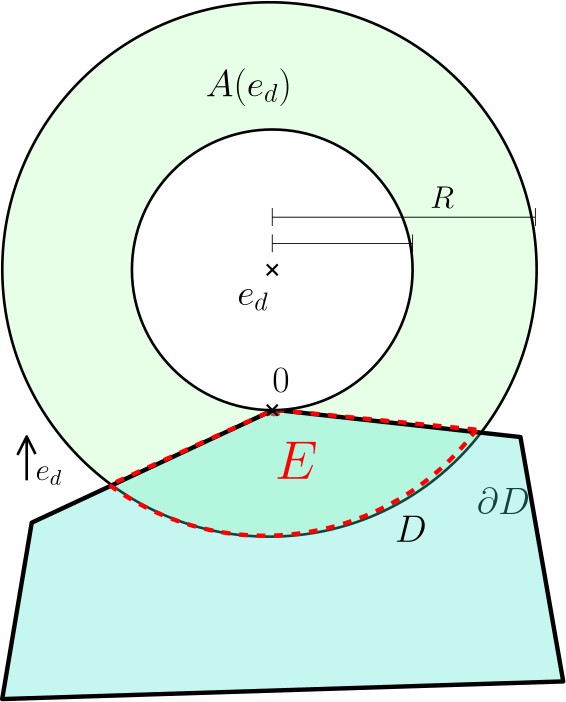}
        \caption*{a) $D$ is convex}
        \label{fig:image1}
    \end{minipage}
    \hfill
    \begin{minipage}[t]{0.49\textwidth}
        \centering
        \includegraphics[width=0.8\linewidth, trim=230 160 160 0, clip]{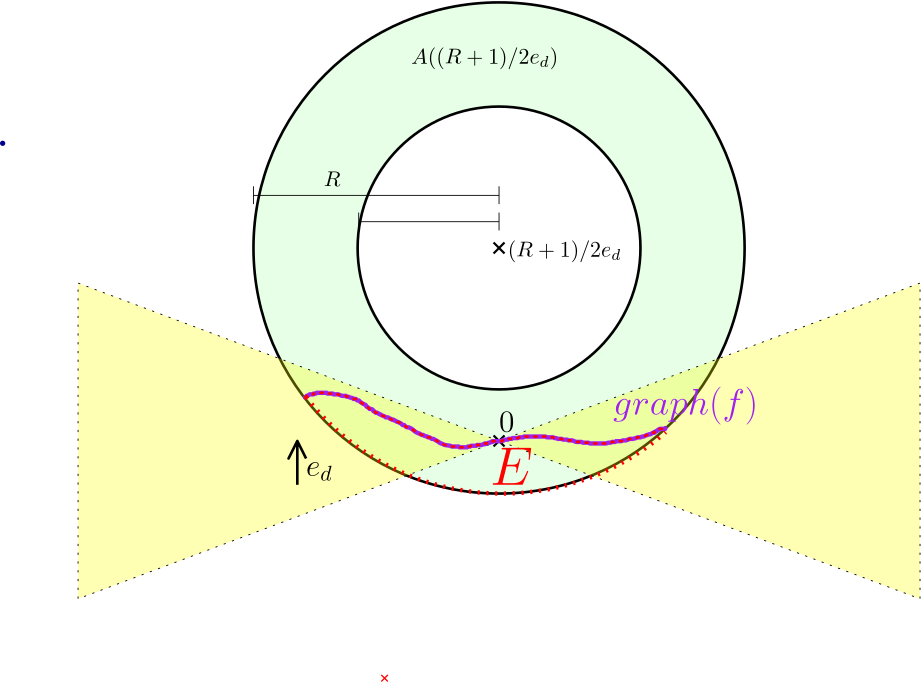}
        \caption*{b) $D$ is $c$-Lipschitz}
        \label{fig:image2}
    \end{minipage}
    \caption{The set-up for the proof of Theorem~\ref{theoremcontinuityuptotheboundary}}
    \label{fig:setup}
\end{figure}
Take the annulus $A$ from Proposition~\ref{solutioninannulus}. We now consider the two cases:
\begin{itemize}
    \item \textbf{$D$ is convex:} Up to a rigid motion, we assume that $D \subset H$, where $H = \{(x',x_d): x_d\leq 0\}$.
        Set 
    $$ E:= A(e_d) \cap D, \qquad \partial E = E_1 \cup E_2
    $$ 
    where
    $$
    E_1 := \partial D \cap A(e_d) \subset \partial D \cap B_1, \qquad E_2 :=\partial B_{R} (e_d) \cap D.
    $$

    Let $v_1$ be the (unique) solution from Proposition~\ref{solutioninannulus}   in the annulus $A$.      Then for $ v(x):= v_1(x+ e_d)$, on $E_1$,
    $$
     v(x)\leq 1 <2 \le u(x)\quad\text{on}\quad E_1,
    $$
    and $ v(x) =0$ on $E_2$.
    In particular, we get that 
    $$
    v\le u\quad\text{on}\quad \partial E.
    $$
    
    \item \textbf{$\partial D$ is $c$-Lipschitz:}  Without loss of generality (up to rotation and rescaling), we assume that $  D$ is the subgraph/hypograph   of a $c$-Lipschitz function $f:B^{d-1}_2 \to \R$ in the $e_d$ direction and $f(0)=0$ (denote here by $B_2^{d-1}$ the $(d-1)$-dimensional ball of radius $2$).
    Since $f$ is $c$-Lipschitz, we have that  for any $0<h<1$ the graph of $f$ lies within $B_h^{d-1}\times [-ch,ch]$.
   
        Then (recall $R\in (1, 2)$ is given by Proposition~\ref{solutioninannulus}, so $\frac{R+1}{2}\in (1, R)$), we have 
    $$
    \graph(f) \cap A\left(\frac{R+1}{2} e_d \right)  \subset B_{2}^{d-1} \times [-2c, 2c].
    $$
    Set now $E$ to be the connected component of 
    $$\hyp(f) \cap A\left(\frac{R+1}{2}e_d \right)$$
    containing the origin.
   As long as $c\leq \frac{R-1}{2}$, $\partial E = E_1 \cup E_2$ with
    $$E_1 \subset \graph(f)\quad  \text{ and }\quad E_2 \subset \partial B_{R}\left(\frac{R+1}{2} e_d \right). $$
         
         Let $v_1$ be the solution on the annulus $A$, for 
    $$v(x):= v_1 \left( x+\frac{(R+1)}{2}e_d \right)$$
    we have
    $$
    v(x)\leq 1 <2 \le u(x) \qquad \text{ on } E_1
    $$ 
    and $v(x)=0$ on $E_2$. In particular, we get again that 
    $$
    v\le u \quad\text{on}\quad \partial E.
    $$
\end{itemize}
In both cases, we have constructed a set $E$ where the boundary consists of two parts $E_1$ and $E_2$, and $u|_{E_1} > v|_{E_1}$ and $u|_{E_2}\geq v|_{E_2}=0$. Also $v|_E$ and $u|_E$ are minimizers on $E$ for their own boundary datum, i.e.\
$$
\fl(v,E) \leq \fl(\min(v,u), E) \qquad \text{and} \qquad \fl(u,E) \leq \fl(\max(v,u), E).
$$
From the cut-and-paste lemma for minimizers to the one-phase problem (see \cite[Lemma~2.5]{velichkov}),
\begin{equation*}
    \fl(\min(v,u),E) + \fl(\max(v,u),E) = \fl (v,E) + \fl(u,E),
\end{equation*}
thus $\fl(v,E)= \fl(\min(v,u),E)$. Since $v$ as a minimizer is unique, we have $v=\min(v,u)$, i.e.\ $u\geq v$ with $v$ vanishing only on a subset of $\partial E$.
Hence, there exists some small $\rho>0$ such that on $B_\rho \cap D$ the function $u$ is strictly positive as was to be shown.
 \end{proof}

As a consequence we obtain the proof of the regularity up to the boundary:

\begin{proof}[Proof of Theorem~\ref{theoremcontinuityuptotheboundary}]
    Assume by contradiction that it is not true. Then there exists $\bar \eps>0$ and a sequence $\{g_k\}_{k\in \mathbb{N}}$ with $g_k \in C(\overline D) \cap H^1(D)$ having a uniform modulus of continuity $\omega$ and $\|g_k\|_{H^1(D)} \leq M<\infty$ such that for some minimizers $u_k$ to \eqref{minimizationproblem} with $u_k-g_k=H_0^1(D)$, there exist $x_k,y_k \in \overline D$ such that 
    \begin{equation}
    \label{thingtocontradictagainst}
     |x_k-y_k| \leq \frac{1}{k}\to 0 \qquad \text{but} \qquad   |u_k(x_k)-u_k(y_k)| \geq \bar \eps >0,\qquad\text{for}\quad k\in \mathbb{N}. 
    \end{equation}
    
By the uniformity of the modulus of continuity $\omega$ and the boundedness of the $H^1$-norm of $g_k$, again up to a subsequence   $g_k \to g_\infty$ for some $g_\infty$ with the same modulus of continuity $\omega$ and $\|g_\infty\|_{H^1(D)}\leq M$. Note also that $u_k \to u_\infty$ in $H^1_{loc}(D)$ with $u_\infty$ being a minimizer of \eqref{minimizationproblem} with  boundary datum $g_\infty$ by \cite[Lemma~6.3]{velichkov}. Both, $u_k$ and $u_\infty$ are locally Lipschitz continuous in $D$ independently of $k$.

Moreover, by compactness of $D$, $x_k$ and $y_k$ converge, up to a subsequence, to $x_0 \in \overline D$. We now separate between three cases: \\

\noindent \textbf{Case $x_0 \in D$:} For sufficiently large $k$, $x_k$, $y_k$, and $x_0$ are inside some $D'\subset \subset D$. By the interior uniform (Lipschitz) continuity of $u_k$ and $u_\infty$ 
we get a contradiction with~\eqref{thingtocontradictagainst}.\\

\noindent \textbf{Case $x_0 \in \partial D \cap \{g_\infty=0\}$:} Since $g_k\to g_\infty$, $g_k(x_0) \to 0$ as well. Consider now the solution to the Dirichlet problem,
\begin{equation*}
\left\{
    \begin{array}{rlll}
        \Delta \bar u_k &=&0 \qquad &\text{in }D\\
        \bar u_k &=& g_k \qquad &\text{on }\partial D.
    \end{array}
    \right.
\end{equation*}
By the classical theory (\cite[Lemma~2.13]{GilbargTrudinger}), the $\bar u_k$'s have the same modulus of continuity $\bar \omega$, depending only on $\omega$, $d$, $D$ and $M$. Thus
\begin{equation*}
    | \bar u_k(x_k)| \leq |\bar u_k(x_k)-\bar u_k(x_0)| + |g_k(x_0)| \leq \bar \omega(|x_k-x_0|) + |g_k(x_0)|,
\end{equation*}
which vanishes as $k \to \infty$. Hence $\bar u_k(x_k), \bar u_k(y_k)\to 0$. On the other hand, from the subharmonicity of $u_k$ , by the comparison principle for weak (sub)solutions we have
\begin{equation*}
    |u_k(x_k) - u_k(y_k)| \leq u_k(x_k) + u_k(y_k) \leq \bar u_k(x_k) + \bar u_k(y_k) \to 0,
\end{equation*}
again a contradiction with \eqref{thingtocontradictagainst}.\\

\noindent \textbf{Case $x_0 \in \partial D \cap \{g_\infty>0\}$:} We proceed by using the barrier argument from Lemma~\ref{lem:basic_used}. Without loss of generality, up to a translation, we assume $x_0=0$ and observe that for some $\rho >0$ and for any $k$ sufficiently large, $u_k>0$ in  $B_\rho \cap D$, and therefore, the $u_k$'s are harmonic there.

Indeed, for $k$ sufficiently large, and up to a rescaling by $r$, $\frac{u_k(r x)}{r}$  (independent of $k$), we can assume that   $\frac{g_k}{r} \geq \frac{g_\infty(0)}{2 r}$ in $B_2 \cap \partial D$, so that, up to taking $r$ smaller if necessary (such that $\frac{g_\infty(0)}{2 r}\geq 2$), we are in the setting of Lemma~\ref{lem:basic_used}. Thus, there is a small $\rho > 0$ (independent of $k$) such that $u_k > 0$ in $D\cap B_\rho$,   and thereby the $u_k$'s are harmonic there. We apply \cite[Lemma~2.13]{GilbargTrudinger} to get $u_k$ continuous in $\overline{B_{\rho/2} \cap D}$ with a common modulus of continuity $\bar \omega$, depending only on $\omega$, $d$, $\rho$, and $D$.

Thus 
$$|u_k(x_k)-u_k(x_0)| \leq \bar \omega(|x_k-x_0|), \qquad |u_k(y_k)-u_k(x_0)|\leq  \bar \omega(|y_k-x_0|)$$
and therefore, as $k \to \infty$, by the triangle inequality
\begin{equation*}
    |u_k(x_k)-u_k(y_k)| \leq \bar \omega(|x_k-x_0|) + \bar \omega(|y_k-x_0|) \to 0,
\end{equation*}
again a contradiction to \eqref{thingtocontradictagainst}.
This finishes the proof.
\end{proof}

\subsection{Hölder continuous boundary datum}
\label{sectionhölder}
For the case with Hölder continuous boundary datum, we show that the Hölder regularity is preserved. We do so by following \cite{velichkovpaper}. First we state a well-known technical tool, the Morrey Lemma \cite[Lemma~3.12]{velichkov}.
\begin{lemma}
\label{morrey}
    Let $\Omega \subset \R^d$, $u\in H^1(\Omega)$ and for $C>0$, $\gamma \in (-1,1)$
\begin{equation*}
    \fint_{B_r(x_0)} |\nabla u|^2 dx \leq C^2 r^{2(\gamma-1)} \qquad \forall x_0 \in B_{R/8}  \quad \forall r \leq \frac{R}{2}.
\end{equation*}
Then $u \in C^{0,\gamma}(B_{R/8})$ with $$\|u\|_{C^{0,\gamma}(B_{R/8})} \leq C \left(2^d + \frac{2}{\gamma} \right). $$
\end{lemma}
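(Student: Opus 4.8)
The plan is to prove the standard Campanato-type estimate: control the oscillation of $u$ on dyadic balls by the gradient growth, and then telescope to obtain pointwise Hölder control. Write $u_{x,\rho} := \fint_{B_\rho(x)} u$ for the average. The first step is to combine the hypothesis with the Poincar\'e–Wirtinger inequality on a ball: for $x \in B_{R/8}$ and $\rho \le R/2$,
\[
\left(\fint_{B_\rho(x)} |u - u_{x,\rho}|^2\,dz\right)^{1/2} \le c_d\, \rho \left(\fint_{B_\rho(x)} |\nabla u|^2\,dz\right)^{1/2} \le c_d\, C\, \rho^{\gamma}.
\]
This is the only place the hypothesis enters; everything afterwards is a deterministic manipulation of averages, and throughout we use $\gamma>0$ (for $\gamma\le 0$ the conclusion is vacuous).

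Next I would telescope along the dyadic radii $\rho_j := 2^{-j}\rho$. Comparing consecutive averages and passing from the ball $B_{\rho_j}$ to the smaller ball $B_{\rho_{j+1}}$ (which costs a volume-ratio factor $|B_{\rho_j}|/|B_{\rho_{j+1}}| = 2^{d}$ inside the $L^2$ average) gives
\[
|u_{x,\rho_{j+1}} - u_{x,\rho_j}| \le \left(\fint_{B_{\rho_{j+1}}} |u - u_{x,\rho_j}|^2\right)^{1/2} \le 2^{d/2} c_d\, C\, \rho_j^{\gamma}.
\]
Since $\gamma>0$ the series $\sum_{j\ge 0} \rho_j^{\gamma} = \rho^{\gamma}\sum_{j\ge 0} 2^{-j\gamma}$ converges, and $\sum_{j\ge 0} 2^{-j\gamma}=(1-2^{-\gamma})^{-1} \le 2/\gamma$ on $(0,1]$; hence $\{u_{x,\rho_j}\}_j$ is Cauchy and converges to a limit $\tilde u(x)$ which, by Lebesgue differentiation, is the precise representative of $u$ at Lebesgue points. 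Summing the geometric series yields the key oscillation bound $|u_{x,\rho} - \tilde u(x)| \le c_d'\, C\, \rho^{\gamma}$, valid for all admissible $x,\rho$.

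Finally, to extract the Hölder seminorm I would fix $x,y \in B_{R/8}$, set $r := |x-y|$ and use the nesting $B_r(y) \subset B_{2r}(x)$: writing
\[
|\tilde u(x) - \tilde u(y)| \le |\tilde u(x) - u_{x,2r}| + |u_{x,2r} - u_{y,r}| + |u_{y,r} - \tilde u(y)|,
\]
the outer terms are controlled by the oscillation bound, while the middle term is estimated by comparing $u_{x,2r}$ to the average over the nested ball $B_r(y)$, again paying a $2^d$ volume ratio. All three terms are bounded by a constant times $C\, r^{\gamma}$, which is exactly the sought Hölder seminorm estimate on $B_{R/8}$.

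The genuinely delicate point is the bookkeeping of constants: every place where the Poincar\'e constant $c_d$, the volume ratio $2^d$, and the geometric sum $2/\gamma$ enter must be tracked to land precisely on $C(2^d + 2/\gamma)$ rather than a larger dimensional constant. An alternative that sidesteps the Poincar\'e constant is the direct integral-representation route, estimating $|u_{x,\rho} - \tilde u(x)| \le (d\,\omega_d)^{-1}\int_{B_\rho(x)} |\nabla u(w)|\,|w-x|^{-(d-1)}\,dw$ (with $\omega_d = |B_1|$) and decomposing $B_\rho(x)$ into dyadic annuli; Cauchy–Schwarz together with the growth hypothesis on each annulus reproduces the same $\sum 2^{-j\gamma}$ geometric sum with cleaner constants. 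I expect matching the stated constant exactly to be the only real obstacle, the structure of the argument being classical.
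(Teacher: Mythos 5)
You should know that the paper contains no proof of this lemma to compare against: it is stated as an imported technical tool and attributed to \cite[Lemma~3.12]{velichkov}, so your argument has to stand on its own as a proof of the cited result. It does: what you describe is the standard Morrey--Campanato argument (Poincar\'e--Wirtinger on balls, dyadic telescoping of averages with the $2^{d/2}$ volume-ratio factor, the geometric series $\sum_{j\ge 0} 2^{-j\gamma} = (1-2^{-\gamma})^{-1} \le 2/\gamma$, which indeed holds on $(0,1]$ by concavity of $\gamma \mapsto 1-2^{-\gamma}$, and the three-term triangle inequality with the nesting $B_r(y)\subset B_{2r}(x)$, admissible because $x,y\in B_{R/8}$ forces $2r<R/2$), and this is essentially how the result is proved in the reference. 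Two remarks. First, your restriction to $\gamma\in(0,1)$ is the correct reading of the statement: for $\gamma\le 0$ the space $C^{0,\gamma}$ and the constant $2^d+2/\gamma$ are meaningless, so the interval $(-1,1)$ in the hypothesis only matters through the exponent $2(\gamma-1)$. Second, as you yourself flag, your bookkeeping produces a constant of the form $c_d\,2^{d/2}(2/\gamma+O(1))$ with $c_d$ the Poincar\'e constant, not literally $C(2^d+2/\gamma)$; note also that the conclusion can only be a bound on the H\"older \emph{seminorm} (adding a large constant to $u$ leaves the hypothesis untouched), which is exactly what your proof delivers. Since the paper invokes the lemma purely qualitatively, to conclude $C^{\gamma}$ regularity in Proposition~\ref{ballHölder}, neither discrepancy affects anything.
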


We now can prove the local version of Proposition~\ref{prop:holder}: 

\begin{proposition}
\label{ballHölder}
Let $D$ be an open bounded $C^{1,\alpha}$ domain in $B_1$, given by the subgraph of a function with $C^{1,\alpha}$ norm bounded by 1.  
    Let $u$ be a minimizer of \eqref{minimizationproblem} on $D$ with boundary datum $g \in C^{\gamma}(\bar D)\cap H^1(D)$ with $1>\gamma>\frac{1}{2}$, and $0\in \partial\{u > 0\}$. 
    Then,
    \begin{equation*}
        \|u\|_{C^{\gamma}(B_{1/2} \cap \overline D)} \leq  C \left( \|u\|_{C^{\gamma}( B_1 \cap \partial D )}+ \|u\|_{L^\infty(B_1  \cap D)} +1\right),
    \end{equation*}
    for some constant $C$ depending only on $d$, $\gamma$, and $\alpha$.
\end{proposition}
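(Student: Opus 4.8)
The plan is to verify the hypothesis of the Morrey Lemma (Lemma~\ref{morrey}) for $u$, i.e. to establish a Campanato-type decay
\[
\int_{B_r(x_0)\cap D}|\nabla u|^2\,dx \le C\,r^{d-2+2\gamma}
\]
uniformly for $x_0\in B_{1/2}\cap\overline D$ and all small $r$, with $C$ controlled by $\|u\|_{C^{\gamma}(\partial D)}$, $\|u\|_{L^\infty}$ and $1$. First I would record the reductions: after flattening the $C^{1,\alpha}$ graph near $0$ I may assume $\partial D$ agrees with $\{x_d=0\}$ up to order $r^{1+\alpha}$ in $B_r$; and since $0\in\partial\{u>0\}\cap\partial D$ with $u\ge 0$ continuous, necessarily $u(0)=g(0)=0$, so on $\partial D\cap B_r$ one has $g=u\le [g]_{C^\gamma}|x|^\gamma\le[g]_{C^\gamma}r^\gamma$. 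Interior balls $B_r(x_0)\subset\subset D$ satisfy the target decay trivially from the interior Lipschitz bound (the exponent $2(\gamma-1)$ being negative), so only boundary-centered balls are at stake. An even/odd reflection across the flattened boundary (or extension via a $C^\gamma$-extension of $g$) then transfers the decay from $B_r\cap D$ to full balls, so that Lemma~\ref{morrey} applies and yields $u\in C^\gamma$.

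The engine is harmonic replacement together with minimality. Let $h$ be harmonic in $B_r(x_0)\cap D$ with $h=u$ on $\partial(B_r\cap D)$; then $h\ge 0$ by the maximum principle, so gluing $h$ into $u$ gives an admissible competitor. Since $h$ minimizes the Dirichlet energy among functions with its own boundary values, and (with $\Lambda=1$) the measure term is bounded by $|B_r\cap D|$, minimality yields
\[
\int_{B_r\cap D}|\nabla(u-h)|^2\,dx \le |\Omega_h\cap B_r\cap D|-|\Omega_u\cap B_r\cap D| \le C r^d .
\]
This is the source of the additive $r^d$ (and of the $+1$ in the statement); because $\gamma<1$ forces $d>d-2+2\gamma$, this error is of higher order than the target rate and will not obstruct the iteration.

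Next I would split $h=h_1+h_2$, where $h_2$ is harmonic with the genuine datum $g$ on $\partial D\cap B_r$ and zero on $D\cap\partial B_r$, and $h_1$ carries the remaining data, so in particular $h_1=0$ on $\partial D\cap B_r$. For $h_1$, zero datum on the $C^{1,\alpha}$ boundary gives (after flattening, as a solution of an equation with $C^\alpha$ coefficients) a $C^{1,\alpha'}$-up-to-the-boundary estimate, hence the clean decay $\int_{B_\rho\cap D}|\nabla h_1|^2\le C(\rho/r)^{d}\int_{B_r\cap D}|\nabla h_1|^2$ for $\rho\le r$. For $h_2$, the boundary-gradient estimate for harmonic functions with $C^\gamma$ datum gives $|\nabla h_2(x)|\le C[g]_{C^\gamma}\,\dist(x,\partial D)^{\gamma-1}$ near the flattened boundary, whence
\[
\int_{B_\rho\cap D}|\nabla h_2|^2\,dx \le C[g]_{C^\gamma}^2\int_{B_\rho\cap D}\dist(x,\partial D)^{2\gamma-2}\,dx \le C[g]_{C^\gamma}^2\,\rho^{d-2+2\gamma}.
\]
Here the radial integral $\int_0^\rho t^{2\gamma-2}\,dt$ converges precisely because $\gamma>\tfrac12$ — exactly the threshold at which a $C^\gamma$ trace sits in $H^{1/2}$ and its harmonic extension has locally finite, correctly-scaling Dirichlet energy. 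I expect this $h_2$ estimate to be the main obstacle and the heart of the matter: for $\gamma\le\tfrac12$ the argument genuinely breaks here, which is why the hypothesis $\tfrac12<\gamma<1$ is imposed.

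Collecting the three contributions gives a Campanato inequality $\Phi(\rho)\le C(\rho/r)^{d}\Phi(r)+C\rho^{d-2+2\gamma}+Cr^d$ for $\Phi(r)=\int_{B_r\cap D}|\nabla u|^2$, after absorbing $\int_{B_r\cap D}|\nabla h|^2\le 2\Phi(r)+Cr^d$. Since the frozen exponent $d$ strictly exceeds the forcing exponent $d-2+2\gamma$, the standard iteration lemma upgrades this to $\Phi(r)\le C r^{d-2+2\gamma}$, the base of the iteration at $r\sim\tfrac14$ being supplied by the interior energy estimate $\int_{B_{1/2}\cap D}|\nabla u|^2\le C(\|u\|_{L^\infty}^2+1)$ and the constant tracking $[g]_{C^\gamma}=\|u\|_{C^\gamma(\partial D)}$. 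Combining boundary and interior balls and feeding the reflected function into Lemma~\ref{morrey} then produces the claimed $C^\gamma$ bound on $B_{1/2}\cap\overline D$. Finally, I note that the curvature of $\partial D$ enters only through higher-order ($r^{1+\alpha}$) corrections in the flattening step, which explains both why $C^{1,\alpha}$ regularity (rather than mere Lipschitz) of the domain is needed and why the constant $C$ depends on $\alpha$ and on the $C^{1,\alpha}$ norm of $D$.
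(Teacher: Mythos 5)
Your proposal is correct and follows essentially the same route as the paper's proof: both establish the Morrey decay $\int_{B_r}|\nabla u|^2\le Cr^{d+2(\gamma-1)}$ at boundary points via harmonic replacement plus minimality (yielding the additive $Cr^d$ error), a decomposition of the replacement into a piece vanishing on the (flattened) boundary with clean $(\rho/r)^d$ decay and a piece carrying the $C^\gamma$ datum controlled by the gradient bound $|\nabla h|\le C[g]_{C^\gamma}\dist(\cdot,\partial D)^{\gamma-1}$ — which is exactly where $\gamma>\tfrac12$ enters in the paper as well — followed by an iteration and Lemma~\ref{morrey}. The only differences are bookkeeping (you replace in the curved domain and push the $C^{1,\alpha}$ curvature into the decay of the zero-datum piece, while the paper flattens first and absorbs $r^\alpha$ coefficient errors in a dyadic iteration), and both treatments defer the same standard points (interior/boundary gluing and the quantitative base-case energy bound) at a comparable level of detail.
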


\begin{proof}
Let $\partial D$ be the graph of a $C^{1,\alpha}$ function $\phi$ in $B_1$, i.e.\ $\partial D \cap B_1 = \{(x',\phi(x')): x' \in \R^{d-1}\}$. 
It suffices to show $\gamma$-Hölder continuity in a small ball $B_{1/8}$.
Denote the positive and negative half spaces by $H^+$ and $H^-$. 

First, we extend $\nabla u:B_1 \cap D \to \R^d$ to $\nabla u:B_1 \to \R^d$. In order to do so, let $\Phi:B_1 \to \R^d$ be the $C^{1,\alpha}$ function
\begin{equation*}
    \Phi(x',x_d) := (x', x_d + g(x')).
\end{equation*}
 Up to translation and rotation, we assume $\Phi(0)=0$ and $D\Phi(0)=I_{d\times d}$.
Let 
$$\pi:(y',y_d) \mapsto (y',-y_d),$$
we define the extension
\begin{equation*}
    \nabla u (x)  = \begin{cases}
        \nabla u (x) \qquad &\text{if } x \in D \cap B_1,\\
        \nabla u ( \Phi \circ \pi \circ \Phi^{-1}( x)) \qquad &\text{if } x \in D^c \cap B_1.
     \end{cases}
\end{equation*}

    The idea is to arrive at an estimate for $u$ of the form
    \begin{equation}
    \label{morreyassumption}
      \forall \bar x \in B_{1/8}, \forall r\leq \frac{1}{2}: \qquad  \int_{B_r(\Bar x)} |\nabla u|^2 \leq Cr^{d+2(\gamma-1)},
    \end{equation}
    and then use the Morrey Lemma \cite[Lemma~3.12]{velichkov}, which gives directly $\gamma$-Hölder regularity in $B_{1/8}$.
   As in \cite{velichkovpaper}, it suffices to show the estimate on the boundary, i.e.\ for a fixed $r>0$ small enough and $x_0 \in \partial D \cap B_{1/8}$,
\begin{equation}
\label{eq:Cbound}
\int_{B_r(x_0)} |\nabla u|^2 \leq Cr^{d+2(\gamma-1)}.
\end{equation}
By a translation, we assume that $x_0=0$.
 Performing the change of variable, 
 $$x=\Phi(y),\qquad  dx=|D\Phi(y)|dy,$$
 we have
\begin{equation*}
\begin{split}
    &\int_{B_r} |\nabla u(x)|^2 dx =\int_{\Phi(B_r)} |\nabla u(x)|^2 dx = \int_{B_r} |\nabla u(\Phi(y))|^2 |D\Phi(y)| dy\\
    &\qquad =  \int_{B_r \cap H^+} |\nabla u(\Phi(y))|^2 |D\Phi(y)| dy + \int_{B_r \cap H^-} |\nabla u(\Phi \circ \pi \circ \Phi^{-1}( \Phi(y)))|^2 |D\Phi(y)| dy \\
    &\qquad =  2\int_{B_r \cap H^+} |\nabla u(\Phi(y))|^2 |D\Phi(y)| dy = 2 \int_{\Phi(H^+ \cap B_r)} |\nabla u(x)|^2 dx.
\end{split}
\end{equation*}

\noindent \textbf{Step 1: }Let $h_g:H^+ \cap B_1 \to \R$ such that
\begin{equation*}
\left\{
    \begin{array}{rlll}
        \Delta h_g &= &0 \qquad &\text{in } H^+ \cap B_1, \\
        h_g &= &g \circ \Phi \qquad &\text{ on } \partial(H^+ \cap B_1),
    \end{array}
    \right.
\end{equation*}
which is in $C^{\gamma}(\bar H^+ \cap B_{1/2})$ by \cite[Proposition~2.1]{Milakis2006RegularityFF} ($u$ is continuous in $\overline D$ by Theorem~\ref{theoremcontinuityuptotheboundary}) with 
\begin{equation*}
\begin{split}
\|h_g\|_{C^{\gamma}(H^+\cap B_{1/2})} &\leq C_{d,\gamma} \left(\|g \circ \Phi\|_{C^{\gamma}(H^+ \cap B_1)} + \|h_g\|_{L^\infty( \partial H^+ \cap B_1)} \right) \\
&\leq C \left(\|g \|_{C^{\gamma}(\partial D)}  + \|g\|_{L^\infty(H^+ \cap B_1)} \right),
\end{split}
\end{equation*}
for $C$ depending on $d,\gamma$ and the $C^{1,\alpha}$ norm of $D$. We now claim that inside $H^+ \cap B_{1/2}$
$$
|\nabla h_g(x)|\leq C_{d} \|h_g\|_{C^{\gamma}(H^+\cap B_{1/2})} \dist(x,\partial H^+)^{\gamma-1}.
$$ 
For a fixed $x$, Take $x_1\in \partial H^+$ such that $ |x-x_1|=\dist(x,\partial H^+) =: \delta(x)$ and let  
$$
v(x)\coloneq h_g(x)-h_g(x_1).
$$
By $\gamma$-Hölder regularity of $v$, from the definition, for $x \in H^+ \cap B_{1/2}$
\begin{equation*}
    v(x) = v(x_1) + R(x)=R(x) \qquad \text{with } |R(x)|\leq 2 \|h_g\|_{C^{\gamma}(H^+\cap B_{1/2})}|x-x_1|^{\gamma}.
\end{equation*}
By standard harmonic estimates,
\begin{equation*}
\begin{split}
|\nabla h_g(x)|= |\nabla v(x)| &\leq \sup_{B_{\delta(x)/4}(x)}|\nabla v(y)| \\
&\leq \frac{d}{\delta(x)} \sup_{B_{\delta(x)/2}(x)} |v(y)| \\
&\leq \frac{d}{\delta(x)} \sup_{B_{\delta(x)/2}(x)} |R(y)| \\
&\leq \frac{2 d\|h_g\|_{C^{\gamma}(H^+\cap B_{1/2})}}{\delta(x)} \sup_{B_{\delta(x)/2}(x)}|y-x_1|^{\gamma} \\ &\leq 4 d \|h_g\|_{C^{\gamma}(H^+\cap B_{1/2})} \delta(x)^{\gamma-1},
\end{split}
\end{equation*}
   proving the claim. In the co-area formula \cite[Theorem~3.2.22]{federer}, take $f(x)= \delta (x)$ and $g(x) = \delta (x)^{2(\alpha-1)}$. Since $\partial H^+ =\{x_d=0\}$ we have $|\nabla f(x)|=1$. Hence, as $$f^{-1}(t)=\{ x \in H^+\cap B_r: \delta (x)=t\},$$ we estimate for $r< \frac{1}{2}$,
    \begin{equation*}
    \begin{split}
    \int_{H^+\cap B_r}|\nabla h_g|^2dx &\leq C_{d} \|h_g\|^2_{C^{\gamma}(H^+\cap B_{1/2})}
        \int_{H^+\cap B_r} \delta (x)^{2(\gamma-1)}dx\\
        &=C_{d} \|h_g\|^2_{C^{\gamma}(H^+\cap B_{1/2})} \int_{H^+\cap B_r} \delta (x)^{2(\gamma-1)} |\nabla f|dx\\
        &=C_{d} \|h_g\|^2_{C^{\gamma}(H^+\cap B_{1/2})} \int_{\R} \int_{\{ x\in H^+\cap B_r: \delta (x)=t\}} \delta (x)^{2(\gamma-1)} d\Hh^{d-1}(x) dt\\
        &=C_{d} \|h_g\|^2_{C^{\gamma}(H^+\cap B_{1/2})} \int_{\R} t^{2(\gamma-1)} \Hh^{d-1}(\{ x\in H^+\cap B_r: \delta (x)=t\}) dt \\
        &=C_{d} \|h_g\|^2_{C^{\gamma}(H^+\cap B_{1/2})} \int_0^r t^{2(\gamma-1)} \Hh^{d-1}(\{ x\in H^+\cap B_r: \delta (x)=t\}) dt \\
        &\leq C_{d} \|h_g\|^2_{C^{\gamma}(H^+\cap B_{1/2})} r^{d-1} \int_0^r t^{2(\gamma-1)} dt \\ 
        &= C_{d} \|h_g\|^2_{C^{\gamma}(H^+\cap B_{1/2})}r^{d+2(\gamma-1)}.
    \end{split}
    \end{equation*}
The rest of the proof follows exactly as in \cite{velichkovpaper}, with the constant only depending on $d$, $\gamma$ and the $C^{1,\alpha}$ norm of $D$, but not on the Hölder norm of $g$.
(We remark that $g$ here is named $\varphi$ in \cite{velichkov}.)
We thereby conclude that \eqref{morreyassumption} is satisfied since
\begin{equation*}
    \int_{B_r(x_0)} |\nabla u|^2 \leq  C  \|h_g\|^2_{C^{\gamma}(H^+ \cap B_{1/2})} r^{d+2(\gamma-1)},
\end{equation*}
and thus by Lemma~\ref{morrey}, the analogous interior regularity estimate, and the bound for $\|h_g\|_{C^\gamma(H^+\cap B_{1/2})}$, we have that $u$ is locally $C^{\gamma}$ Hölder continuous with 
\begin{equation*}
    \|u\|_{C^{\gamma}(B_{1/8}\cap \overline D)} \leq  C \left( \|u\|_{C^{\gamma}( B_1(x_0)\cap \partial D )}+ \|u\|_{L^\infty(B_1(x_0) \cap D))}+1 \right),
\end{equation*}
 as we wanted to show.
\end{proof}

\noindent As a consequence, we obtain directly Proposition~\ref{prop:holder}.

\begin{proof}[Proof of Proposition~\ref{prop:holder}]
    Since $D$ is bounded and a $C^{1,\alpha}$ domain, up to a rescaling we can pick $x_1,...,x_N \in \partial D$ such that $\partial D \subset \bigcup_{i=1}^N B_{1/2} (x_i)=:S$ and inside each $B_{1 }(x_i)$ we are in the setting of Proposition~\ref{ballHölder}. Thus by Proposition~\ref{ballHölder}, 
    \begin{equation*}
    \begin{split}
        \|u\|_{C^\gamma(S)} &\leq \sum_{i=1}^N  \|u\|_{C^{\gamma}(B_{1/2}(x_i)\cap \overline D)} \\&\leq \sum_{i=1}^N C \left( \|u\|_{C^{\gamma}( B_1(x_i)\cap \partial D )}+ \|u\|_{L^\infty(B_1(x_i) \cap D))} +1\right) \\
        &\leq C \left( \|g\|_{C^{\gamma}(\partial D )}+ \|u\|_{L^\infty( D)} +1\right).
    \end{split}
    \end{equation*}
    Since $u$ is Lipschitz continuous on $D\backslash S \subset \subset D$  with 
    \begin{equation*}
        \|\nabla u\|_{L^\infty (D\backslash S)} \leq C \left( 1+ \frac{\|u\|_{L^\infty(D)}}{\dist^{d+1}(D\backslash S, \partial D)} \right),
    \end{equation*}
    the result follows.
\end{proof}

\begin{remark}
For $\gamma< \frac{1}{2}$, it is not a priori given that the boundary datum $g$ is the trace of a function in $H^{1}(D)$. For $\gamma\leq \frac{1}{2}$ and $g\in C^{\gamma}(\bar D) \cap H^{1}(D)$, the previous proof does not work, since we crucially use the minimality if $u$.
\end{remark}

\begin{remark}
    If $\gamma=1$ (i.e.\ the datum is Lipschitz), then using the same argument as above we recover the result from \cite{velichkovpaper}, as expected,
        \begin{equation*}
       \forall x_0\in \overline \Omega \cap B_{1/2}, r\leq \frac{1}{2}: \qquad \fint_{B_r(x_0)} |\nabla u|^2 \leq Cr^{d+2(\gamma-1)}, \qquad \forall \gamma<1.
    \end{equation*}
    This implies local $\gamma$-Hölder regularity for any $\gamma<1$, but not Lipschitz regularity, in the exact same fashion as for the Laplace equation with Dirichlet boundary condition. It remains open whether this result could be improved to show e.g.\ $\log$-Lipschitz continuity of the solution,
    \begin{equation*}
        |u(x)-u(y)| \leq C |\log|x-y||\cdot
        |x-y | \qquad \forall x,y\in \overline D.
    \end{equation*}
\end{remark}

\section{Applications of boundary regularity}
\label{sectiongeneric}
\subsection{Generic uniqueness of minimizers}
\label{sectionuniqueness}
Since the functional $\fl$ is not convex, in general, there is no reason to expect uniqueness of minimizers. Already in one dimension it is possible to construct a boundary datum $g$ giving two nonidentical minimizers. However, the cases with several minimizers are rare and we expect ``almost everywhere" a unique minimizer. We start with a general comparison principle, which can be applied to many different contexts.
\begin{lemma}
\label{orderedwrtdatum}
    Let $D$ be a bounded open domain of $\R^d$, and let $g,g'\in C(\overline{D})\cap H^1(D)$ with  $g'  \geq g \ge 0$ in $D$ and $g'(z)>g(z)>0$ at some $z$ in each connected component of $\partial D \cap \{g>0\}$. Then for corresponding minimizers to \eqref{minimizationproblem}, $u_g$ and $u_{g'}$, we have $u_{g'}\geq u_g$ on $\overline D$.
\end{lemma}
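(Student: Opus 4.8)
The plan is to exploit the lattice (cut-and-paste) structure of the functional, and then to run the strong maximum principle on the positivity set of the \emph{smaller} datum, comparing with the maximum $\max(u_g,u_{g'})$ rather than with $u_{g'}$ directly.

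First I would record that $w:=\min(u_g,u_{g'})$ and $v:=\max(u_g,u_{g'})$ are admissible competitors for the two problems: since $g'\ge g$, the traces satisfy $\min(g,g')=g$ and $\max(g,g')=g'$, so $w-g,\,v-g'\in H_0^1(D)$. Minimality of $u_g$ and $u_{g'}$ gives $\fl(u_g,D)\le\fl(w,D)$ and $\fl(u_{g'},D)\le\fl(v,D)$, while the cut-and-paste identity \cite[Lemma~2.5]{velichkov} gives $\fl(w,D)+\fl(v,D)=\fl(u_g,D)+\fl(u_{g'},D)$. Adding the two inequalities forces equality throughout, so that $v=\max(u_g,u_{g'})$ is itself a minimizer with datum $g'$ (and $w$ one with datum $g$).

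Next comes the key point. A direct comparison of $u_g$ and $u_{g'}$ is doomed, since on $\{u_g>0\}$ the difference $u_{g'}-u_g$ is subharmonic (both functions being subharmonic minimizers), the wrong sign for a maximum principle. Instead I would fix a connected component $\omega$ of $\{u_g>0\}$ and compare $v$ with $u_g$ there. On $\omega$ the function $u_g$ is harmonic and positive, and since $v\ge u_g>0$ we also have $\omega\subseteq\{v>0\}$, so $v$ is harmonic on $\omega$ as well. Hence $h:=v-u_g=(u_{g'}-u_g)^+$ is harmonic in $\omega$ and continuous up to $\overline{\omega}$ by Theorem~\ref{theoremcontinuityuptotheboundary}. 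On the free-boundary part $\partial\omega\cap D$ we have $u_g=0$, so $h=u_{g'}\ge 0$; on $\partial\omega\cap\partial D$ we have $h=g'-g\ge 0$. Thus $h\ge 0$ on all of $\partial\omega$, and by the strong maximum principle $h$ is either identically $0$ or strictly positive in $\omega$. To exclude $h\equiv 0$ I use the hypothesis: since $u_g$ is a nonzero positive harmonic function on $\omega$ vanishing on $\partial\omega\cap D$, its boundary values cannot vanish on all of $\partial\omega\cap\partial D$, so $\Gamma_\omega:=\partial\omega\cap\partial D\cap\{g>0\}\ne\varnothing$; this component of $\{g>0\}\cap\partial D$ carries a point $z$ with $g'(z)>g(z)$, whence $h(z)>0$ and therefore $h>0$ throughout $\omega$, i.e.\ $u_{g'}>u_g$ on $\omega$. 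Running this over every component of $\{u_g>0\}$, and noting $u_{g'}\ge 0=u_g$ on $\{u_g=0\}$, gives $u_{g'}\ge u_g$ on $\overline D$.

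The step I expect to require the most care is the connectivity argument matching $\omega$ with a full boundary component of $\{g>0\}$, so that the strict inequality is actually ``seen'' on $\partial\omega$: one must check that positivity $u_g=g>0$ propagates along a connected component $\Gamma$ of $\partial D\cap\{g>0\}$ into a single component $\omega$ of $\{u_g>0\}$, which is where continuity up to the boundary and the regularity of $D$ (so that $D\cap B_r(x)$ is connected for small $r$) enter. Once this localization is in place, the remainder is a clean combination of the cut-and-paste identity and the strong maximum principle.
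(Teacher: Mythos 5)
Your proof is correct and follows essentially the same route as the paper's: both use the cut-and-paste identity to show that $\max(u_g,u_{g'})$ is itself a minimizer, apply the strong maximum principle to its (harmonic, nonnegative) difference with $u_g$ on a connected component of $\{u_g>0\}$, and rule out the degenerate case $h\equiv 0$ via the hypothesis point $z$ on the boundary component of $\{g>0\}\cap\partial D$, reached by continuity up to the boundary (Theorem~\ref{theoremcontinuityuptotheboundary}). The only differences are presentational — you argue by dichotomy on every component where the paper argues by contradiction at a single point — and the propagation step you flag as delicate is precisely the one the paper also settles by a brief appeal to continuity up to the boundary.
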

\begin{proof}
Since on $\{u_g =0\}$ the result holds trivially, consider the open set $\Omega_{u_g} = \{u_g >0\}\cap D$.

Define $\Tilde u(x) \coloneq \max \left\{ u_{g'}(x), u_g(x)\right\}$. 
Since by a computation (\cite[Lemma~2.5]{velichkov})
\begin{equation*}
    \fl(\max\{u_{g'},u_{g}\},D) + \fl(\min\{u_{g'},u_{g}\},D) = \fl(u_{g'},D) + \fl(u_g,D),
\end{equation*}
we have that $\Tilde u$ is also a minimizer with $\Delta \Tilde u = 0$ in $\Omega_{u_g} \subset \Omega_{\Tilde u}$. 
Suppose for contradiction that there exists $x_0 \in \Omega_{u_g}$ such that $u_g(x_0) > u_{g'}(x_0)$, that is $\Tilde u(x_0)= u_g(x_0)$. Let $\mathcal{C}$ be the connected component of $\Omega_{u_g}$ containing $x_0$. 

Set $h(x) \coloneq \Tilde u(x) -u_g(x) \geq 0$, then on $\mathcal{C}$, $\Delta h=0$ and as its minimum value of $0$ is attained at $x_0 \in \mathcal{C}$, by the strong maximum principle $h\equiv 0$ and $\Tilde u = u_g$ on $\overline{\mathcal{C}}$. 

We now show for the sake of contradiction that $\overline{\mathcal{C}}$ contains boundary points where $\Tilde u > u_g>0$.
If $\partial \mathcal{C} \subset \{u_g=0\}$, then by the maximum principle, $u\equiv 0$ in $\mathcal{C}$, contradicting $\mathcal{C} \subset \Omega_{u_g}$. Hence $\partial \mathcal{C} \cap \Omega_{u_g}  \neq \varnothing$. Let $x_1 \in \partial \mathcal{C} \cap \Omega_{u_g}$, if $x_1 \in \textnormal{int}(D)$, then by interior Lipschitz continuity $x_1$ cannot be a point in $\partial C$, i.e.\ $x_1 \in \partial D$ with $g(x_1)>0$.
Now the whole component $\partial D \cap \{g>0\}$ containing $x_1$ is contained in $\partial \mathcal{C}$ by continuity from Theorem~\ref{theoremcontinuityuptotheboundary}. 
But by assumption we have $z \in \bar C$ with
$$\Tilde u(z) \geq u_{g'}(z) = g'(z) > g(z) = u_{g}(z),$$
a contradiction to the fact that $\Tilde u = u_g$ on $\mathcal{C}$, finishing the proof.   
\end{proof}


We are now able to prove the generic uniqueness for the one-phase problem, using the argument from \cite[Proposition 1.2]{fernandezreal2023generic}, but for a wider class of boundary data.
\begin{proof}[Proof of Proposition~\ref{theoremgenericuniqueness}]
    During the proof we again use the Lipschitz continuity of minimizers, and we recall that we are taking $\Lambda=1$. By Lemma~\ref{orderedwrtdatum}, minimizers are ordered with respect to the boundary datum, i.e. $t' > t>0$ implies that $u_{t'} \geq u_{t}$. 

Let $t$ be such that there are at least two distinct minimizers $u_t^1, u_t^2$. Let $u_t^+ = \max\{u_t^1, u_t^2\}$ and $u_t^- = \min\{u_t^1, u_t^2\}$, then
\begin{equation*}
\begin{split}
    \fl(u_t^+) + \fl(u_t^-) &= \int_{D \cap \{u_t^1 \geq u_t^2\}} |\nabla u_t^1|^2 + \int_{D \cap \{u_t^1 < u_t^2\}} |\nabla u_t^2|^2 \\
    & \quad +  |(\{u_t^1>0\} \cup \{u_t^2>0\}) \cap D| \\
    &  \quad + \int_{D \cap \{u_t^1 \geq u_t^2\}} |\nabla u_t^2|^2 + \int_{D \cap \{u_t^1 < u_t^2\}} |\nabla u_t^1|^2 \\\
    & \quad + |\{u_t^1>0\} \cap \{u_t^2>0\} \cap D| \\
    &= \fl(u_t^1) + \fl(u_t^2).
\end{split}
\end{equation*}
As $u_t^1$ and $u_t^2$ are minimizers, so are $u_t^+$ and $u_t^-$. Now let $x_0$ be a point where $u_t^1$ and $u_t^2$ differ, i.e.\ without loss of generality $u_t^1(x_0) - u_t^2(x_0) = \eps$. By Lipschitz continuity, there exists $B_r(x_0)$ where $u_t^1(x_0) - u_t^2(x_0) > \eps/2$. Thus for $\rho = \min \{\eps/3, r\}$, there exists a $d+1$ dimensional ball $B_\rho$ such that $B_\rho \subset \textrm{epi}( u_t^2) \backslash \textrm{epi} (u_t^1)$. 

Repeating the argument for any $t$ with non-unique minimizers gives a collection of disjoint (as minimizers are ordered with respect to the boundary datum) balls. Since there can be at most countably many disjoint open balls in $\R^d$, the proof is finished.
\end{proof}

\subsection{Generic regularity of the free boundary}
\label{sectionregularity}
We now prove a slightly weaker version of Theorem~\ref{theoremgenericregularity notequicontinuous}, namely, the case  where the boundary data is equicontinuous.
The main part is already done in \cite[Section 4]{fernandezreal2023generic}, it remains only to prove \cite[Lemma 4.3]{fernandezreal2023generic} for the larger class of boundary data (i.e.\ equicontinuous, which then implies the result for continuous data, see Lemma \ref{countablesubfamilies}) instead of equi-Lipschitz). 

\begin{proposition}
\label{theoremgenericregularity}
    Theorem~\ref{theoremgenericregularity notequicontinuous} holds under the added assumption that the family $g_t$ is equicontinuous.
\end{proposition}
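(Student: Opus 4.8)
The plan is to reduce everything to the generic regularity machinery already developed in \cite[Section~4]{fernandezreal2023generic}, isolating the single place where the equi-Lipschitz hypothesis was used and replacing it with the weaker equicontinuity hypothesis. The key structural fact is that the strict monotonicity $g_t - g_s \ge t-s$ forces the family of minimizers $\{u_t\}$ to be strictly ordered and to ``sweep out'' the domain $D\times\R$ in a controlled way; singular free boundary points of different $u_t$ live on disjoint graphs. First I would recall the monotonicity of minimizers from Lemma~\ref{orderedwrtdatum}: the hypothesis $g_t - g_s\ge t-s>0$ in $D$ certainly implies $g_t\ge g_s\ge 0$ together with strict inequality everywhere on $\{g_s>0\}\cap\partial D$, so $u_t\ge u_s$ pointwise on $\overline D$ for $t>s$. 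This ordering is what allows a Federer-type dimension-reduction / measure-counting argument on the union of free boundaries to bound the Hausdorff dimension of the singular set for almost every $t$.

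Next I would carry out the adaptation of \cite[Lemma~4.3]{fernandezreal2023generic}. In that lemma one shows that the rescaled family remains compact and that blow-up limits at singular points retain the relevant structure; the original proof invoked equi-Lipschitz bounds to guarantee uniform convergence and to pass the ordering to the limit. The replacement is to use Theorem~\ref{theoremcontinuityuptotheboundary}: under the standing hypotheses (uniform $H^1$ bound $\sup_t\|g_t\|_{H^1(D)}<\infty$, and now the \emph{added} equicontinuity assumption), the family $\{g_t\}$ has a common modulus of continuity $\omega$, and hence by Theorem~\ref{theoremcontinuityuptotheboundary} the minimizers $\{u_t\}$ share a common modulus of continuity $\bar\omega$ up to $\partial D$, depending only on $\omega$, $D$, $\Lambda$ and $M$. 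This uniform modulus up to the boundary is exactly the input needed to run the compactness step of \cite[Lemma~4.3]{fernandezreal2023generic}: it yields uniform convergence $u_{t_k}\to u_{t_\infty}$ on $\overline D$ along any sequence $t_k\to t_\infty$, so that the boundary data and the solutions converge compatibly, the monotone ordering passes to the limit, and the remaining steps (the stratification of $\mathrm{Sing}(u_t)$ and the Federer reduction giving $\dim_{\mathcal H}\mathrm{Sing}(u_t)\le d-d^*-2$ for a.e.\ $t$, and $\mathrm{Sing}(u_t)=\varnothing$ off a countable set when $d=d^*+1$) go through verbatim as in \cite[Section~4]{fernandezreal2023generic}.

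Concretely, I would structure the proof as: first quote Lemma~\ref{orderedwrtdatum} to obtain strict ordering; then observe that equicontinuity plus the uniform $H^1$ bound place us in the hypotheses of Theorem~\ref{theoremcontinuityuptotheboundary}, yielding a uniform modulus of continuity $\bar\omega$ for the whole family $\{u_t\}$ on $\overline D$; then state that, with this uniform boundary modulus in hand, every statement in \cite[Section~4]{fernandezreal2023generic}---in particular the analogue of \cite[Lemma~4.3]{fernandezreal2023generic}---holds with ``equi-Lipschitz'' replaced by ``equicontinuous with modulus $\bar\omega$,'' since the only use of the Lipschitz bound was to secure uniform compactness up to the boundary; and finally invoke the conclusion of \cite[Theorem~1.x]{fernandezreal2023generic} to deduce the dichotomy in $d=d^*+1$ versus $d\ge d^*+2$.

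The main obstacle I expect is verifying that the uniform modulus of continuity $\bar\omega$ is genuinely strong enough to substitute for the Lipschitz bound at \emph{every} point where \cite[Section~4]{fernandezreal2023generic} used equi-Lipschitzness---not only for the compactness of $\{u_t\}$ itself but also for the convergence of the associated free boundaries and the stability of the stratified singular set under blow-up. In particular one must check that the argument never secretly used the \emph{derivative} bound (for instance, to control the normal derivative $|\nabla u|=\sqrt\Lambda$ along $\partial\Omega_{u_t}$ uniformly, or to get a uniform interior Lipschitz constant independent of $t$). Here the escape is that interior Lipschitz regularity of one-phase minimizers is \emph{automatic and uniform} given only a uniform $L^\infty$ (hence $C(\overline D)$) bound---which the uniform modulus $\bar\omega$ and the $H^1$ bound supply---so the interior estimates needed for the blow-up analysis are recovered without assuming boundary Lipschitz data. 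Confirming that this is the only role played by the Lipschitz hypothesis, so that the reduction is clean, is the crux of the argument.
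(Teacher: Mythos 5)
Your proposal assembles the right ingredients (the ordering from Lemma~\ref{orderedwrtdatum}, and the uniform modulus of continuity $\bar\omega$ for the whole family $\{u_t\}$ coming from Theorem~\ref{theoremcontinuityuptotheboundary}), but it misidentifies what the regularity hypothesis is actually needed for, and as a result the key step is missing. The content of \cite[Lemma~4.3]{fernandezreal2023generic} that must be re-proved is a \emph{quantitative sweeping estimate}: there exists $\kappa=\kappa(\tau,d,\omega,M)>0$ such that
\begin{equation*}
\sup_{B_{\kappa(t-t_0)}(x)} u_{t_0} \le u_t(x) \qquad \text{for } x\in B_{3/4},\ t\in[t_0,1),
\end{equation*}
i.e.\ the positivity sets expand at a definite linear rate in $t$. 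This rate is what drives the measure-counting/dimension-reduction argument over $t$ in \cite[Section~4]{fernandezreal2023generic}; compactness or uniform convergence $u_{t_k}\to u_{t_\infty}$ on $\overline D$ (which is what you propose to extract from $\bar\omega$) is a purely qualitative statement and cannot produce the ``countable $J$'' or ``a.e.\ $t$'' conclusions. So the claim that ``the only use of the Lipschitz bound was to secure uniform compactness up to the boundary'' is not accurate, and the reduction you describe does not close.

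The paper's actual mechanism is the following, and this is where the equicontinuity genuinely replaces equi-Lipschitzness. From the hypothesis $g_t-g_s\ge t-s$ and $g_s\ge 0$ one gets $g_{t_0}\ge \tau$ on $\partial D$ for $t_0\in[\tau,1)$. Theorem~\ref{theoremcontinuityuptotheboundary} then gives a modulus $\bar\omega(d,\omega,M)$ for $u_{t_0}$, \emph{uniform in} $t_0$, and hence a collar $U:=B_1\setminus\overline{B_{1-8\delta}}$ of width $8\delta=8\delta(\tau,d,\omega,M)$ (choose $\delta$ so that $\bar\omega(8\delta)<\tau$) on which $u_{t_0}>0$. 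By Lemma~\ref{orderedwrtdatum}, $u_t\ge u_{t_0}$, so $U\subset\Omega_{u_{t_0}}\subset\Omega_{u_t}$ and $u_t-u_{t_0}$ is a nonnegative harmonic function in $U$ with boundary values $g_t-g_{t_0}\ge t-t_0$ on $\partial B_1$; a Harnack-type argument in this collar of uniform width then propagates the gap $t-t_0$ into the interior, and combined with the (automatic, uniform) interior Lipschitz bound on $u_{t_0}$ this yields the sweeping estimate with a uniform $\kappa$. Your write-up never constructs this collar, never uses the lower bound $g_{t_0}\ge\tau$ on $\partial D$, and never produces the rate $\kappa(t-t_0)$; indeed you flag exactly this verification as ``the crux of the argument'' and leave it open. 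That is the genuine gap.
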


\begin{proof}
For simplicity of the exposition, we assume that $D=B_1$. 
Fix $\tau \in (0,1)$ and let $x_0$ be a free boundary point, $x_0 \in B_{1/2}\cap \partial \Omega_{u_{t_0}}$. for some $t_0 \in [\tau,1)$. In view of \cite[Lemma 4.3]{fernandezreal2023generic}, we need to show that there exists $\kappa=\kappa(\tau, d, \omega, M)>0$ (with $\omega$  the common modulus of continuity of $\{g_t\}$ and $M$ its uniform $H^1$ bound), such that
\begin{equation*}
    \sup_{B_{\kappa(t-t_0)}(x)} u_{t_0} \leq u_t(x) \qquad \text{for } x \in B_{3/4} \text{ and } t\in [t_0,1).
\end{equation*}
By the assumptions on $\{g_t\}$, we have 
\begin{equation*}
    g_{t_0} \geq \tau > 0 \qquad \text{ on } \partial B_1.
\end{equation*}
Applying now Theorem~\ref{theoremcontinuityuptotheboundary}
($u_{t_0}$ is continuous up to the boundary with modulus of continuity $\bar \omega (d, \omega, M)$), gives $\delta=\delta(\tau, d, \omega,M)<1/16$ such that
\begin{equation*}
    u_{t_0}> 0 \qquad \text{in } U:=B_1 \backslash \overline{B_{1-8 \delta}}.
\end{equation*}
Using now the comparison lemma, Lemma~\ref{orderedwrtdatum}, gives $u_t \geq u_{t_0}$ for $t\geq t_0$ and as $U \subset \Omega_{u_{t_0}} \subset \Omega_{u_t}$, 
\begin{equation*}
    \Delta (u_t-u_{t_0}) =0 \qquad \text{in }U.
\end{equation*}
The rest of the proof follows analogously to \cite[Lemma 4.3]{fernandezreal2023generic}.
\end{proof}

We set out to remove the assumption of equicontinuity in the previous statement. 
The rough idea is to partition a family of continuous (not necessarily equicontinuous) functions into countable subfamilies of equicontinuous functions, apply Theorem~\ref{theoremgenericregularity} on each subfamily and then show that the size of the set of functions not falling into any equicontinuous subfamily is small. This is due to the separability of continuous functions.

\begin{lemma}
\label{countablesubfamilies}
Let $d\geq 1$ and $D \subset \R^d$ a bounded Lipschitz domain.
    Let $ \{g_t\}_{t\in (0,1)}  $ be a monotone  family of continuous functions in $\overline{D}$, i.e.\ 
    $$ t \ge s \quad \implies\quad  g_t \geq g_s\quad\text{in}\quad \overline{D}.$$
    Then, there exists a countable family of disjoint open intervals $\{I_k\}_{k\in \mathbb{N}}$ with $I_k \subset (0, 1)$ such that
    \begin{itemize}
        \item $(0, 1)\setminus \bigcup_{k\in \mathbb{N}} I_k$ is countable,  
        \item  $g_t$ are locally equicontinuous for  $t\in I_k$. That is, for any $K\subset I_k$ compact, the family $\{g_t\}_{t\in K}$ is equicontinuous.   
    \end{itemize}
\end{lemma}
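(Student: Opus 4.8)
Since $\overline D$ is compact, $C(\overline D)$ is separable, and for any $a<b$ in $(0,1)$ monotonicity gives $g_a\le g_t\le g_b$, so the family is locally uniformly bounded. For each $t$ the pointwise one-sided limits
\[
g_{t^-}(x):=\lim_{s\uparrow t}g_s(x)=\sup_{s<t}g_s(x),\qquad g_{t^+}(x):=\lim_{s\downarrow t}g_s(x)=\inf_{s>t}g_s(x)
\]
exist by monotonicity, with $g_{t^-}\le g_t\le g_{t^+}$; moreover $g_{t^-}$ is lower semicontinuous (an increasing limit of continuous functions) and $g_{t^+}$ is upper semicontinuous. The first step is to prove the following sufficient condition for equicontinuity: \emph{if $I\subset(0,1)$ is an open interval such that $g_{t^-}$ and $g_{t^+}$ are continuous for every $t\in I$, then $\{g_t\}_{t\in I}$ is locally equicontinuous.} Indeed, fix a compact $K\subset I$ and a sequence $t_n\in K$. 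Passing to a monotone subsequence $t_n\downarrow t^\ast$ (resp.\ $t_n\uparrow t^\ast$) with $t^\ast\in K\subset I$, the convergence $g_{t_n}\to g_{t^{\ast+}}$ (resp.\ $g_{t^{\ast-}}$) is monotone and pointwise with a \emph{continuous} limit, so by Dini's theorem it is uniform. Hence every sequence in $\{g_t:t\in K\}$ has a subsequence converging in $C(\overline D)$, so this set is relatively compact, and by Arzelà--Ascoli it is equicontinuous.

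\textbf{The bad set and the main obstacle.} By the reduction it suffices to control
\[
B:=\{t\in(0,1):\ g_{t^-}\ \text{or}\ g_{t^+}\ \text{is discontinuous on }\overline D\},
\]
and the \textbf{main difficulty} of the whole proof is to show that $\overline B$ is \emph{countable}. Note that a discontinuity of, say, the upper semicontinuous function $g_{t^+}$ at a point $x_0$ is necessarily an upward spike; since $g_{t^-}$ is lower semicontinuous it cannot carry that spike, so $g_{t^+}(x_0)-g_{t^-}(x_0)\ge\beta>0$, i.e.\ the real monotone function $s\mapsto g_s(x_0)$ has a genuine jump of size $\ge\beta$ at $s=t$. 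The plan is to prove that for each fixed $\beta>0$ the set $B_\beta$ of such points is \emph{locally finite}: on $[a,b]$ the jumps of $s\mapsto g_s(x_0)$ occurring at a \emph{common} point $x_0$ stack up and are bounded by $g_b(x_0)-g_a(x_0)\le\|g_b-g_a\|_{L^\infty}$, while distinct bad times must produce, in the continuous endpoint function $g_b$, features of height $\ge\beta$ that cannot accumulate (a continuous function on compact $\overline D$ has only finitely many well-separated spikes of a given height); here compactness of $\overline D$ and separability of $C(\overline D)$ are used to organize the locations $x_0$. Then $B=\bigcup_{m}B_{1/m}$ is countable. The delicate point is upgrading this to countability of the \emph{closure}: accumulation points of $B$ can only arise from ``fading'' spikes whose heights tend to $0$, and these are captured by the countable jump set of the single nondecreasing real function $\lambda(t):=\int_{\overline D}g_t\,d\mu$ for a fixed fully-supported probability measure $\mu$ on $\overline D$ (strictly increasing across any genuine increase of $g_t$). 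I expect this last accumulation estimate, combining the stacking bound with $\lambda$, to be the technically demanding step.

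\textbf{Construction of the intervals.} Once $\overline B\cap(0,1)$ is known to be countable, set $U:=(0,1)\setminus\overline B$. This is an open subset of $(0,1)$, hence a countable disjoint union of open intervals $U=\bigsqcup_{k}I_k$; this is the desired family. By construction $(0,1)\setminus\bigcup_k I_k=\overline B\cap(0,1)$ is countable, giving the first bullet. For the second, each $I_k\subset U$ is disjoint from $B$, so $g_{t^-},g_{t^+}$ are continuous for every $t\in I_k$, and the reduction of the first paragraph yields local equicontinuity of $\{g_t\}_{t\in I_k}$. Note that only the compactness of $\overline D$ (ensuring separability of $C(\overline D)$ and the validity of Dini/Arzelà--Ascoli) is used, so the hypotheses $d\ge1$ and $D$ bounded Lipschitz are amply sufficient.
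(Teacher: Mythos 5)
Your opening reduction is correct and is a nice, genuinely different idea from the paper's: introducing the one-sided pointwise limits $g_{t^\pm}$ and showing, via Dini's theorem plus the compactness form of Arzel\`a--Ascoli, that $\{g_t\}$ is locally equicontinuous on any open interval where every $g_{t^\pm}$ is continuous. The gap is exactly the step you yourself flag as the main difficulty: the countability of $\overline B\cap(0,1)$ is never proved, and the plan you sketch for it rests on claims that are false. The claim that $B_\beta$ is locally finite fails: on $\overline D=[0,1]$ take $t_n\uparrow \tfrac12$, $x_n=\tfrac1n$, and half-widths $w_n>0$ so small that the intervals $[x_n-w_n,x_n+w_n]$ are pairwise disjoint and $w_n\le t_{n+1}-t_n$; set $\psi_k(x)=\max\bigl(0,1-|x-x_k|/w_k\bigr)$, define $g_t\equiv 0$ for $t\le t_1$,
\begin{equation*}
g_t(x)=\sum_{k<n}\psi_k(x)+\max\Bigl(0,\,1-\tfrac{|x-x_n|}{\min(w_n,\,t-t_n)}\Bigr)\qquad\text{for }t\in(t_n,t_{n+1}],
\end{equation*}
and $g_t\equiv 1$ for $t\ge\tfrac12$. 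This is a monotone family of continuous functions, but $g_{t_n^+}=\sum_{k<n}\psi_k+\mathbf{1}_{\{x_n\}}$ has a spike of height $1$ at $x_n$ for every $n$, so $B_1\supset\{t_n\}_{n\in\mathbb{N}}$ accumulates at $\tfrac12$: $B_\beta$ need not be locally finite. The same example refutes the supporting heuristic that bad times must leave spikes of height $\ge\beta$ in the endpoint function $g_b$: here $g_{1/2}\equiv 1$ is constant, because later growth can bury all spikes. Your second device fails independently: taking $g_s\equiv 0$ for $s\le t$ and, for $s>t$, $g_s$ a tent at $x_0$ of height $\beta$ and half-width $s-t$, one gets $g_{t^+}=\beta\,\mathbf{1}_{\{x_0\}}$, so $t\in B_\beta$, while $\lambda(s)=\int g_s\,d\mu$ is continuous at $t$ whenever $x_0$ is not an atom of $\mu$ (and $\mu$ has at most countably many atoms, so such $x_0$ always exists); jumps concentrated at single points are invisible to $\lambda$.

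The countability you need is in fact true, but the efficient route---the paper's---avoids your set $B$ and its closure altogether: define the good set $T$ directly as the set of $t$ admitting a neighborhood $(t-\eta,t+\eta)$ on which the family is equicontinuous. This set is open by construction, so no closure ever has to be taken; and if $t\notin T$, monotonicity forces a one-sided uniform gap, i.e.\ $\inf_{s>t}\|g_s-g_t\|_{L^\infty(D)}\ge\tfrac1m$ or $\inf_{s<t}\|g_s-g_t\|_{L^\infty(D)}\ge\tfrac1m$ for some $m\in\mathbb{N}$ (if both one-sided infima vanish, the modulus of continuity of $g_t$ alone makes the family equicontinuous near $t$). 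Any two times in the same stratum are then at $L^\infty$-distance $\ge\tfrac1m$ from each other, so separability of $C(\overline D)$ makes each stratum, hence $T^c$, countable. If you want to keep your structure, you must either replace $B$ by the complement of this (automatically open) set $T$, or prove the countability of $T^c$ by this separation argument; your Dini/Arzel\`a--Ascoli reduction can then stand as an alternative proof of the second bullet, in place of the paper's finite-subcover argument.
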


\begin{proof}
Let $T \subset (0,1)$ be the set where $\{g_t\}_{t\in (0, 1)}$ is locally equicontinuous, i.e.\
\begin{equation*}
    T:= \{ t\in (0,1): \quad \exists \eta>0 \text{ s.t. } \{g_s\}_{(t-\eta,t+\eta)} \text{ is equicontinuous} \}.
\end{equation*} 
Since for $t_0 \in T$ and $t\in B_{\eta/2}(t_0)$ (where $\eta=\eta(t_0)$), 
$$\{g_s\}_{(t-\eta/2,t+\eta/2)} \subset \{g_s\}_{(t_0-\eta,t_0+\eta)},$$
it follows that $T$ is open in $(\{g_s\}_{(0,1)}, \|\cdot\|_{L^\infty(D)})$.
Next, we show by contraposition that for a fixed $t\in T^c$, either 
\begin{equation*}
    \exists \eps>0 : \inf_{s>t} \|g_t-g_s\|_{L^\infty(D)} \geq \eps \quad \text{ or } \quad \exists \eps>0 : \inf_{s<t} \|g_t-g_s\|_{L^\infty(D)} \geq \eps.
\end{equation*}
Indeed, suppose it is not true true. Since $\{g_s\}_{(0,1)}$ is monotone, we have 
\begin{equation*}
\begin{split}
    \forall \eps>0,\, & \inf_{s>t} \|g_t-g_s\|_{L^\infty(D)} < \eps \quad \text{ and } \quad \forall \eps>0 ,\, \inf_{s<t} \|g_t-g_s\|_{L^\infty(D)} < \eps \\
\end{split}
\end{equation*}
that is, 
\[
\lim_{s\to t} \|g_t-g_s\|_{L^\infty(D)} = 0.
\]
Let $\eps>0$ and take $\eta, \delta>0$ such that 
\begin{equation*}
    \begin{split}
    |s-t| < \eta &\implies \|g_t-g_s\|_{L^\infty(D)} < \frac{\eps}{3},\\
    |x-y| < \delta &\implies |g_t(x)-g_t(y)| < \frac{\eps}{3}.
    \end{split}
\end{equation*}
By the triangle inequality,
\begin{equation*}
    |g_s(x)-g_s(y)| \leq |g_s(x)-g_t(x)| + |g_t(x)-g_t(y)| + |g_t(y)-g_s(y)| \leq \eps,
\end{equation*}
it follows that $\{g_s\}_{(t-\eta,t+\eta)}$ is equicontinuous, as $\delta$ does not depend on $s$, giving a contradiction. 

It remains to show that $T^c$ is countable. We have
\[
T^c = \underbrace{ \{t \in (0,1): \inf_{s>t} \|g_t-g_s\|_{L^\infty(D)} >0 \}}_{=:T^c_1} \cup  \underbrace{\{t \in (0,1): \inf_{s<t} \|g_t-g_s\|_{L^\infty(D)} >0 \}}_{=:T^c_2}.
\]
 Let $$ J_m := \{t \in (0,1): \inf_{s>t} \|g_t-g_s\|_{L^\infty(D)} \geq \tfrac{1}{m}\}, \quad\text{for}\quad m \in \mathbb{N}.$$
 Then, by definition, we have 
 \[
 T_1^c = \bigcup_{m\in \mathbb{N}} J_m. 
 \]

For any $m\in \mathbb{N}$ and $t_1, t_2\in J_{m}$ with $t_2 \neq t_1$ we have $\|g_{t_1} - g_{t_2}\|_{L^\infty(D)} \ge \tfrac{1}{m} > 0$, and so the family $\{g_t\}_{t\in J_{m}}$ is a family of continuous functions that are pairwise at distance $\tfrac{1}{m}$.
However, $C(\bar D)$ and then also $\{g_t\}_{t \in J_{m}}$ are separable, that is, there exists a countable subset $I \subset J_{m}$ such that $\{g_t\}_{t \in I}$ is dense in $\{g_t\}_{t \in J_{m}}$. Take $g_t \in \{g_t\}_{t \in J_{m}}$ and $t_n \in I \subset J_{m}$ with $\|g_{t_n} - g_t\|_{L^\infty(D)} \to 0$. The lower bound on the pairwise distance implies that $t_n = t$ for all $n$ large enough, i.e. for any $t\in J_{m}$, $g_t \in \{g_t\}_{t \in I}$, thus $J_m$ is countable. Since $m$ was arbitrary and countability is preserved under countable unions, $T^c_1$ is countable and by the same argument $T^c_2$ is countable as well, and so is $T^c$.

Since $T \subset (0,1)$ is open it can be written as a disjoint union of open intervals, that is $(0,1) = T^c \cup T = T^c \cup \bigcup_{k \in \mathbb{N}} I_k$. Now,  $K \subset \subset I_k$, $\{(t-\eta_t, t+\eta_t\}_{t \in K}$ admits a finite subcover; and we take as common modulus of continuity its maximum. 
\end{proof}

We now combine Proposition~\ref{theoremgenericregularity} with Lemma~\ref{countablesubfamilies} to obtain the generic regularity result for a general (not necessarily equicontinuous) family of continuous boundary data, thus proving Theorem \ref{theoremgenericregularity notequicontinuous} in its full generality.

\begin{proof}[Proof of Theorem~\ref{theoremgenericregularity notequicontinuous}]
    We first treat the case $d=d^* +1$. By Lemma~\ref{countablesubfamilies}, let $J_0= (0, 1)\setminus \bigcup_{k\in \mathbb{N}} I_k$ and $\{g_s\}_{s}$ locally equicontinuous for $s\in I_k$ for each $k$. By taking a countable compact exhaustion of $I_k$ and applying Theorem~\ref{theoremgenericregularity} in each compact, we deduce that, for each $k$,  there is $J_k$ countable such that $\textnormal{Sing}(u_t) = \varnothing$ on $I_k \backslash J_k$. The result follows by setting $J:= J_0 \cup \bigcup_{k \in \mathbb{N}} J_k$.
    
    For the case $d\geq d^* +2$, since the dimension estimate $\dim_{\Hh} \textnormal{Sing}(u_t) \leq d-d^*-2$ holds for almost every $t \in K$ and every $K \subset \subset I_k$, it holds a.e.\ on $I_k$. Hence it holds also almost everywhere on $(0,1)$. 
\end{proof}

\printbibliography 

@book{velichkov,
author = {Velichkov, B.},
year = {2023},
title = {{Regularity of the One-phase Free Boundaries}},
series = {Lecture Notes of the Unione Mathematica Italiana},
publisher={Springer}
}

@article{Weiss1999PartialRF,
  title={Partial regularity for a minimum problem with free boundary},
  author={G. Weiss},
  journal={The Journal of Geometric Analysis},
  year={1999},
  volume={9},
  pages={317-326},
}

@article{criticaldimensionccurrent,
  title={Some remarks on stability of cones for the one-phase free boundary problem},
  author={Jerison, D.  and Savin, O.},
  journal={Geometric and Functional Analysis},
  year={2014},
  volume={25},
  pages={1240-1257},
}

@article{fernandezreal2023generic,
      title={Generic properties in free boundary problems}, 
      author={{Fern{\'a}ndez-Real}, X.  and Yu, H.},
      year={2023},
journal={arXiv:  2308.13209},
}

@book{GilbargTrudinger,
  title={{Elliptic Partial Differential Equations of Second Order}},
  author={Gilbarg, D.  and Trudinger, N. },
  year={1977},
series={Classics in Mathematics},
  publisher={Springer}
}

@book{federer,
  title={{Geometric Measure Theory}},
  author={Federer, H.},
  year={1969},
publisher={Springer},
series ={Classics in Mathematics}
}

@article{velichkovpaper,
    author = "Nick Edelen and Luca Spolaor and Bozhidar Velichkov",
     title = "A strong maximum principle for minimizers of the one-phase Bernoulli problem",
   journal = "Indiana Univ. Math. J.",
  fjournal = "Indiana University Mathematics Journal",
    volume = 73,
      year = 2024,
     issue = 3,
     pages = "1061--1096",
     coden = "IUMJAB",
   mrclass = "35R35",
}

@article{caffarellioriginalC1alpha,
  title={Existence and regularity for a minimum problem with free boundary.},
  author={Alt, H.  and  Caffarelli, L. },
  journal={Journal f{\"u}r die reine und angewandte Mathematik },
  year={1981},
  volume={325},
  pages={105-144},
}

@book{caffarellisalsa,
  title={{A Geometric Approach to Free Boundary Problems}},
  author={Caffarelli, L. and Salsa, S.},
  series={Graduate studies in mathematics},
  year={2005},
  publisher={American Mathematical Society}
}

@article{Milakis2006RegularityFF,
  title={Regularity for Fully Nonlinear Elliptic Equations with Neumann Boundary Data},
  author={E. Milakis and L. Silvestre},
  journal={Communications in Partial Differential Equations},
  year={2006},
  volume={31},
  pages={1227 - 1252},
}

@article{caffarellifirst,
author = {Caffarelli, L. },
title = {{The regularity of free boundaries in higher dimensions}},
volume = {139},
journal = {Acta Mathematica},
publisher = {Institut Mittag-Leffler},
pages = {155-184},
year = {1977},
}

@book{applicationflame,
  place = {Cambridge},
  series = {Cambridge Monographs on Mechanics},
  title = {Theory of Laminar Flames}, 
  publisher = {Cambridge University Press}, 
  author = {Buckmaster, J. and Ludford, G.},
  year = {1982},
  series = {Cambridge Monographs on Mechanics}
}

@article{applicationsjetflow,
title = {Jet flows with gravity},
author = {H. Alt and L. Caffarelli and A. Friedman },
pages = {58--103},
volume = {1982},
number = {331},
journal = {Journal für die reine und angewandte Mathematik},
year = {1982},
lastchecked = {2024-02-13}
}

@article{Engelstein2022GraphicalST,
  title={Graphical solutions to one-phase free boundary problems},
  author={M. Engelstein and X. Fern{\'a}ndez-Real and H. Yu},
  journal={Journal f{\"u}r die reine und angewandte Mathematik},
  year={2023},
  volume={804},
  pages={155 - 195},
}

@article{recent3,
author = {M. Engelstein and L. Spolaor and B. Velichkov},
title = {{Uniqueness of the blowup at isolated singularities for the Alt–Caffarelli functional}},
volume = {169},
journal = {Duke Mathematical Journal},
number = {8},
publisher = {Duke University Press},
pages = {1541 -- 1601},
year = {2020},
}

\end{document}